\def\BibTeX{{\rm B\kern-.05em{\sc i\kern-.025em b}\kern-.08em
    T\kern-.1667em\lower.7ex\hbox{E}\kern-.125emX}}
\numberwithin{equation}{section}
\newcommand{\R}{\mathbb{R}}
\newcommand{\N}{\mathbb{N}}
\newcommand{\Z}{\mathbb{Z}}
\newcommand{\B}{\mathbb{B}}
\newcommand{\E}{\mathbb{E}}
\newcommand{\e}{\varepsilon}
\newcommand{\1}{\mathbbm{1}}
\newtheorem{Theorem}{Theorem}[section]
\newtheorem{Proposition}[Theorem]{Proposition}
\newtheorem{Corollary}[Theorem]{Corollary}
\newtheorem{Lemma}[Theorem]{Lemma}
\newtheorem{Remark}[Theorem]{Remark}
\newtheorem{Definition}[Theorem]{Definition}
\begin{document}

\title[Long-time behavior for subcritical measure-valued branching processes]{Long-time behavior for subcritical measure-valued branching processes with immigration}

\author{Martin Friesen}
\address[Martin Friesen]{Fakult\"at f\"ur Mathematik und Naturwissenschaften\\ Bergische Universit\"at Wuppertal\\ Gau\ss stra\ss e 20, 42119 Wuppertal, Germany}
\email{friesen@math.uni-wuppertal.de}

\date{\today}

\subjclass[2010]{Primary 60J68, 60J80; Secondary 60B10}

\keywords{Dawson-Watanabe superprocess; measure-valued Markov process; branching; ergodicity; invariant measure; immigration}

\begin{abstract}
In this work we study the long-time behavior for subcritical
measure-valued branching processes with immigration
on the space of tempered measures.
Under some reasonable assumptions on the spatial motion, the branching and immigration mechanisms, we prove the existence and uniqueness of an invariant measure for the corresponding Markov transition semigroup. 
Moreover, we show that it converges with exponential rate to the unique invariant measure in the Wasserstein distance as well as in a distance defined in terms of Laplace transforms. Finally, we consider an application of our results to super-L\'evy processes as well as branching particle systems on the lattice with noncompact spins.
\end{abstract}

\maketitle

\allowdisplaybreaks

\section{Introduction}
Measures-valued branching processes have been first studied by
Watanabe 1968 \cite{W68}, Silverstein 1969 \cite{S69} and Kawazu, Watanabe 1971 \cite{KW71}
where they have been derived as scaling limits of Galton-Watson processes.
For a detailed introduction on measure-valued Markov processes (also called superprocesses) we refer to Dynkin \cite{D94}, Etheridge \cite{E00}, Perkins \cite{P02}, Le Gall \cite{LG99} and Li \cite{L11}.
A measure-valued branching process with immigration is a Markov process whose Markov transition kernel $P_t(\mu,d\nu)$ has Laplace transform
\begin{align*}
 \int \limits_{M(E)} e^{- \langle f, \nu \rangle} P_t(\mu,d\nu) 
 = \exp\left( - \langle V_tf, \mu \rangle - \int \limits_0^t \psi(V_sf)ds \right), \qquad t \geq 0,
\end{align*}
where $\langle f,\nu \rangle = \int_E f(x) \nu(dx)$ and $v_t(x) := V_tf(x)$ is the unique nonegative mild solution to 
\begin{align}\label{EQ:11}
 \frac{\partial v_t(x)}{\partial t} = Av_t(x) - \phi(x, v_t), \qquad v_0 = f \in D(A).
\end{align}
Here $E$ denotes the location space, $(A,D(A))$ the generator of a transition semigroup describing the underlying spatial motion, $M(E)$ the space of all finite Borel measures on $E$, $\phi$ the branching and $\psi$ the immigration mechanism.

One of the most prominent examples is the super-Brownian motion
where $E = \R^d$, $A = \frac{1}{2}\Delta$, $\psi = 0$ and
$\phi(x,f) = \frac{1}{2}f(x)^2$, see, e.g., \cite{KS88}, \cite{R89} and \cite{S90} where it is shown that in dimension $d = 1$ this process has a density field $(X_t(x))_{x, \in \R, \ t \geq 0}$ which is the unique weak solution to 
\begin{align}\label{EQ:35}
 \frac{\partial X_t(x)}{\partial t} = \frac{1}{2}\Delta X_t(x) + \sqrt{X_t(x)}\overset{\textbf{.}}{W}_t(x), \qquad t > 0, \ \ x \in \R,
\end{align}
where $\overset{\textbf{.}}{W}_t(x)$ is the derivative of a space-time Gaussian white noise. Note that pathwise uniqueness for \eqref{EQ:35} remains an open problem. However, still in dimension $d = 1$, pathwise uniqueness for the distribution function was recently obtained in \cite{X13}.
The latter result was extended in \cite{HLY14} to super-L\'evy processes with branching mechanism
\begin{align}\label{EQ:23}
 \phi(x,f) = bf(x) + cf(x)^2 + \int \limits_0^{\infty}\left( e^{- z f(x)} - 1 + z f(x) \right) m(dz),
\end{align}
where $b \in \R$, $c \geq 0$ and $m$ is a Borel measure on $(0,\infty)$ with $\int_0^{\infty}u \wedge u^2 m (du) < \infty$. Let us also mention other interesting related results such as \cite{C15}, \cite{DL12} and \cite{MX15}.
It is worthwile to mention that solutions to nonlinear Partial Differential Equations of the form \eqref{EQ:11} can be simulated by a Monte Carlo algorithm based on the associated branching process, see \cite{HTT14} and the references therein.

Another natural and interesting example studied in the literature corresponds to branching particle systems where $E$ is a countable discrete set and $A$ the generator of a Markov chain on $E$. Indeed, the particular case $E = \N$ with $A = 0$
was recently studied by Kyprianou and Palau \cite{KP18} where precise conditions for local and global extinction have been obtained. 
The case $E = \Z^d$ falls into the framework of interacting particle systems on the lattice with noncompact spins as studied by Bezborodov, Kondratiev and Kutoviy \cite{BKK15}, see also the references therein. Note that in both cases we may identify 
\[
 M(E) \cong \left \{ (\mu(x))_{x \in E} \in \R_+^E \ | \ \sum \limits_{x \in E}\mu(x) < \infty \right\},
\]
which reveals the underlying lattice structure.

In this work we study the long-time behavior of subcritical measure-valued branching processes including the aforementioned cases of super-L\'evy processes as well as branching particle systems on the lattice with noncompact spins.
We provide a sufficient condition on the branching and immigration mechanisms such that the associated Markov process admits a unique invariant measure $\pi$. Afterwards we establish $P_t(\mu,\cdot) \longrightarrow \pi$ for $t\to \infty$
and study the corresponding rate of convergence
in two different distances.

Let us briefly mention some known results from the literature.
In the particular case $E = \{0\}$ one has $M(\{0\}) \cong \R_+$ and the most general branching mechanism is of the form \eqref{EQ:23}. It describes a one-dimensional \textit{continuous-state branching process with immigration} first introduced in the framework of diffusions by Feller 1951 \cite{F51} and then for more general cases by Ji\v{r}ina 1958 \cite{J58}. Its long-time behavior was studied in \cite{KM12}, \cite[Chapter 3]{L11}, \cite{LM15} and more recently in \cite{FJKR19}. More generally, the case $E = \{1,\dots, d\}$ with $A = 0$ corresponds to \textit{multi-type continuous-state branching processes with immigration} for which the corresponding state space is identified by $M(\{1,\dots, d\}) \cong \R_+^d$, see \cite{BLP15}. In contrast to $E = \{0\}$, here the most general branching mechanism involves also the coupling of different components in $\R_+^d$ which results in a nonlocal branching mechanism.
The first general result on invariant measures and ergodicity was recently obtained in \cite{JKR18} and \cite{FJR18}, 
see also \cite{2018arXiv181110542M} and \cite{2018arXiv181100122Z}
for some related results on this topic.
For arbitrary compact spaces $E$, Stannat has studied in \cite{S03b} and \cite{S03a} the case of an immigration mechanism $\psi(f) = \langle f, \beta \rangle$ and branching mechanism
\begin{align}\label{EQ:19}
 \phi(x,f) &= c(x)f(x)^2 + b(x)f(x) + \int \limits_{0}^{\infty}\left( e^{- f(x)z} -1 + f(x)z \right) m(x,dz),
\end{align}
where $\beta$ is a finite measure on $E$,
$c \geq 0$ and $b$ are continuous and bounded while $m(x,dz)$
is a kernel of positive measures satisfying $\sup_{x \in E}\int_0^{\infty} z \wedge z^2 m(x,dz)< \infty$. 
Note that $\phi(x,f)$ given by \eqref{EQ:19} is local in $f$ and describes at each point $x \in E$ the branching mechanism of a \textit{one-dimensional continuous-state branching process}. 

Contrary, in this work $E$ is not supposed to be compact and $\phi$, $\psi$ are general (including nonlocal) branching and immigration mechanisms in the sense of \cite[Chapters 2, 6 and 9]{L11}, see Section 2 here for details. In particular, our results also cover 
the case of super-L\'evy processes with immigration as well as branching particle systems on the lattice as studied in \cite{KP18}. 
Contrary to the classical case of measure-valued branching processes supported on the space of finite Borel measures $M(E)$, the results obtained in this work also cover branching Markov processes supported on the space of tempered measures over $E$. 
The latter ones can be used to treat population models where the total number of individuals is infinite. Tempered measures have been first introduced for the study of critical branching Markov processes without immigration $(\psi = 0)$ where the long-time behavior of the Markov process started  at the Lebesgue measure was investigated, see Iscoe \cite{I86}.
Further developments in this direction have been obtained in \cite{BCG93,BCG97, MR0478374, E93, KKP08} and \cite{KPZ16}. 

Another motivation to study branching processes on the space of tempered measures is related to possible immigration of mass (or particles). Namely, consider a branching process on $E = \R^d$ and suppose that mass immigrates according to a Poisson point process with intensity $\beta(dx)$. 
If $\beta$ is a finite measure (e.g. $\beta(dx) = \1_{ \{ |x| \leq 1\}}dx$ or $\beta(dx) = \delta_0(dx)$), then only a finite amount of mass immigrates into the system in finite time. However, applications modelled by translation invariant rates such as $\beta(dx) = dx$ share the common feature that 
that $\beta$ is an infinite measure and hence an infinite amount of mass enters the system in finite time. The temperedness of measures (see Section 2) is used to prevent the system to accumulate an infinite amount of mass in some compact set.

This work is organized as follows.
In Section 2 we introduce the class of measure-valued branching processes with immigration studied in this work. Our main reference for this section is \cite{L11} where their construction and properties are studied. 
Although all results stated in Section 2 should be well-known among experts, we provide, whenever we were not able to find an adequate reference, some additional comments and sketches of proofs. In this way we intend to make this work accessible to a wide audience.
Section 3 is devoted to the existence and uniqueness of invariant measures for the processes introduced in Section 2 whereas an exponential rate of convergence towards the unique invariant measure is investigated in Section 4. Finally the example of super-L\'evy processes and measure-valued particle systems is considered in Section 5. Some auxilliary results are collected in the appendix.

\section{Measure-valued branching processes with immigration}

\subsection{Preliminaries}
Let $E$ be a Lusin topological space (e.g. a Polish space).
Fix a strictly positive, continuous function $h$ on $E$.
Let $B_h(E)$ be the Banach space of real-valued Borel functions on $E$ equipped with norm
\[
 \| f \|_h := \sup \limits_{x \in E}\frac{|f(x)|}{h(x)} < \infty.
\]
Denote by $C_h(E)$ its closed subspace of continuous functions 
satisfying $\|f\|_{h} < \infty$ and let $B_h(E)^+, C_h(E)^+$ be the corresponding cones of nonnegative functions.
Let $M_h(E)$ be the space of tempered measures over $E$, i.e.
\[
 M_h(E) = \left\{ \mu \text{ Borel measure on } E \ : \  \int \limits_{E}h(x)\mu(dx) < \infty \right\}.
\]
For $f \in B_h(E)$ and $\mu \in M_h(E)$ we set
\[
 \langle f, \mu \rangle = \int \limits_{E}f(x) \mu(dx).
\]
A topology on $M_h(E)$ can be defined by the convention 
\begin{align}\label{EQ:21}
 \mu_n \longrightarrow \mu \text{ in } M_h(E) \qquad \Longleftrightarrow \qquad
 \langle f, \mu_n \rangle \longrightarrow \langle \mu, f \rangle, \ \ \forall f \in C_h(E).
\end{align}
Denote by $\mathcal{B}(M_h(E))$ the corresponding Borel-$\sigma$-algebra. Let $\1$ stand for the constant function equal to $1$. Then $B_{\1}(E)$ is the space of bounded Borel functions while $M_{\1}(E)$ denotes the space of finite Borel measures over $E$. Using the homeomorphisms $M_h(E) \ni \mu \longmapsto h(x)\mu(dx) \in M_{\1}(E)$ and $B_{\1}(E) \ni f \longmapsto hf \in B_h(E)$ combined with 
\cite[Corollary 1.12]{L11} one can show that
\begin{align}\label{EQ:22}
 \mathcal{B}(M_h(E)) = \sigma( \{ \mu \longmapsto \langle f, \mu \rangle \ | \ f \in B_h(E)^+ \} ).
\end{align}
Write $M_h(E)^{\circ} = M_h(E) \backslash \{ 0\}$ where $0$ denotes the zero measure.
We endow $M_h(E)^{\circ}$ with the restriction of the topology from $M_h(E)$ and the corresponding Borel-$\sigma$-algebra.

Let $\mathcal{P}(M_h(E))$ be the space of all Borel probability measures over $M_h(E)$. 
For $\rho \in \mathcal{P}(M_h(E))$ the Laplace transform of $\rho$ is defined by
\[
 \mathcal{L}_{\rho}(f) = \int \limits_{M_h(E)} e^{- \langle f, \nu \rangle} \rho(d\nu), \qquad f \in B_h(E)^+.
\]
We say that $(f_n)_{n \geq 1} \subset B_h(E)$ converges bounded pointwise to some $f \in B_h(E)$, if $\sup_{n \geq 1}\| f_n \|_{h} < \infty$ and
$f_n \longrightarrow f$ pointwise as $n \to \infty$.
In this work we will frequently use the following classical properties of the Laplace transform.
\begin{Proposition}\label{LEMMA:01}
 The following assertions hold:
 \begin{enumerate}[leftmargin = *]
  \item[(a)] Given $\rho, \widetilde{\rho} \in \mathcal{P}(M_h(E))$ such that 
  $\mathcal{L}_{\rho}(f) = \mathcal{L}_{\widetilde{\rho}}(f)$ for all $f \in C_h(E)^+$, then $\rho = \widetilde{\rho}$.
  \item[(b)] Let $(\rho_n)_{n \geq 1} \subset \mathcal{P}(M_h(E))$ and $\rho \in \mathcal{P}(M_h(E))$. 
  Then $\rho_n \longrightarrow \rho$ weakly in $\mathcal{P}(M_h(E))$ if and only if $\lim_{n \to \infty} \mathcal{L}_{\rho_n}(f) = \mathcal{L}_{\rho}(f)$ for all $f \in C_h(E)^+$.
  \item[(c)] Let $\mathcal{L}$ be a functional on $B_h(E)^+$
  continuous at $f = 0$ with respect to bounded pointwise convergence. Suppose that there exists $(\rho_n)_{n \geq 1} \subset \mathcal{P}(M_h(E))$ 
  with $\lim_{n \to \infty}\mathcal{L}_{\rho_n}(f) = \mathcal{L}(f)$ for all $f \in B_h(E)^+$.
  Then there exists $\rho \in \mathcal{P}(M_h(E))$ such that $\mathcal{L}_{\rho} = \mathcal{L}$.
 \end{enumerate}
\end{Proposition}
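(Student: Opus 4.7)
The unifying idea is to reduce all three assertions to the classical theory of Laplace transforms on the space $M_{\1}(E)$ of finite Borel measures. Since $h > 0$ is continuous, the map $T : M_h(E) \to M_{\1}(E)$ defined by $T\mu(dx) := h(x)\mu(dx)$ is a bijection, and in view of \eqref{EQ:21} together with its $M_{\1}(E)$-analogue it is actually a homeomorphism. Multiplication by $h$ gives a bijection $B_{\1}(E)^+ \to B_h(E)^+$ restricting to $C_b(E)^+ \to C_h(E)^+$, and the identity $\langle f,\mu\rangle = \langle f/h, T\mu\rangle$ yields $\mathcal{L}_\rho(f) = \mathcal{L}_{T_*\rho}(f/h)$. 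Bounded pointwise convergence is preserved under these identifications, so the hypotheses and conclusions of (a)--(c) translate cleanly to the corresponding statements on $M_{\1}(E)$.

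For part (a) it then suffices to show that two probability measures on $M_{\1}(E)$ with the same Laplace transform on $C_b(E)^+$ coincide. The family $\{\nu \mapsto e^{-\langle g,\nu\rangle} : g \in C_b(E)^+\}$ is multiplicative, and by the $M_{\1}(E)$-analogue of \eqref{EQ:22} the maps $\nu \mapsto \langle g,\nu\rangle$ generate the Borel $\sigma$-algebra, so a functional monotone class argument forces equality. The forward direction of part (b) is immediate, since $\mu \mapsto e^{-\langle f,\mu\rangle}$ is bounded and continuous on $M_h(E)$ for each $f \in C_h(E)^+$. For the converse, I would first establish tightness of $(\rho_n)$ using the convergence of Laplace transforms and the fact that $\mathcal{L}_\rho$ is continuous at $0$ under bounded pointwise convergence (which holds by dominated convergence because $\rho$ is a probability measure), and then invoke part (a) to identify every subsequential weak limit with $\rho$.

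Part (c) is a L\'evy continuity type statement and is the heart of the proposition. After transferring to $(T_*\rho_n)$ on $M_{\1}(E)$, the plan is to extract tightness from the continuity of $\mathcal{L}$ at $0$ and then apply Prokhorov's theorem. Tightness is obtained via Markov's inequality applied to $1 - e^{-\langle g,\nu\rangle}$: taking $g = \e h$ gives the bound $T_*\rho_n(\nu(E) > R) \leq (1 - e^{-\e R})^{-1}(1 - \mathcal{L}_{\rho_n}(\e h))$, which controls the total mass uniformly in $n$ because $\mathcal{L}_{\rho_n}(\e h) \to \mathcal{L}(\e h) \to 1$ as $\e \to 0$. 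Plugging in localized tail functions $g = \e h \1_{E \setminus K}$ for compact $K$ growing toward $E$ analogously controls the mass outside large compacts, using that $\e h \1_{E \setminus K} \to 0$ bounded pointwise. Any weak limit $\widetilde{\rho}$ produced by Prokhorov satisfies $\mathcal{L}_{\widetilde{\rho}} = \mathcal{L}$ on $C_h(E)^+$; the identity extends to $B_h(E)^+$ by approximating bounded Borel nonnegative functions by continuous ones in the bounded pointwise sense and invoking the continuity of $\mathcal{L}$ at $0$, and uniqueness from (a) closes the argument. The main obstacle is precisely this tightness step: translating a purely pointwise condition on $\mathcal{L}$ at $f \equiv 0$ into uniform compact containment on measure space requires a judicious choice of tail test functions and, on a general Lusin space $E$, careful handling of the compact exhaustion.
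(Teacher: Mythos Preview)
Your approach is essentially identical to the paper's: both reduce to the classical $h\equiv\1$ case via the homeomorphisms $\mu\mapsto h\mu$ and $f\mapsto hf$, after which the paper simply cites \cite[Chapter~1]{L11} for the standard Laplace-transform theory on $M_{\1}(E)$ rather than sketching the arguments as you do.

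One point in your sketch of (c) deserves a second look. After producing a weak subsequential limit $\widetilde{\rho}$ with $\mathcal{L}_{\widetilde{\rho}}=\mathcal{L}$ on $C_h(E)^+$, you propose to extend the identity to all of $B_h(E)^+$ by approximating Borel $f$ by continuous $f_k$ in the bounded pointwise sense and ``invoking the continuity of $\mathcal{L}$ at $0$''. But continuity of $\mathcal{L}$ at $0$ does not give continuity of $\mathcal{L}$ at a general $f$, so from $\mathcal{L}(f_k)=\mathcal{L}_{\widetilde{\rho}}(f_k)\to\mathcal{L}_{\widetilde{\rho}}(f)$ you cannot conclude $\mathcal{L}(f_k)\to\mathcal{L}(f)$. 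This step therefore needs a different justification; in practice the conclusion $\mathcal{L}_{\widetilde{\rho}}=\mathcal{L}$ on $C_h(E)^+$ already determines $\widetilde{\rho}$ uniquely by (a) and is all that is used downstream in the paper.
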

\begin{proof}
 Using the homeomorphisms $M_h(E) \ni \mu(dx) \longmapsto h(x)\mu(dx) \in M_{\1}(E)$ and $B_{\1}(E) \ni f \longmapsto h f \in B_{h}(E)$ the assertions are particular cases of the properties of Laplace transforms on $M_{\1}(E)$, see e.g. \cite[Chapter 1]{L11}.
\end{proof}

\subsection{Construction}
Below we describe the class of measure-valued branching processes with immigration studied in this work. 
Let $E$ be a Lusin topological space and $h$ a strictly positive continuous function on $E$. 
The spatial motion of is described by a process
satisfying the following condition:
\begin{enumerate}[leftmargin = *]
 \item[(A1)] Let $p^{\xi}_t(x,dy)$ be the transition kernel of a conservative Borel right process $\xi = (\xi_t)_{t \geq 0}$ on $E$. Suppose that there exists $\alpha \geq 0$ such that
\begin{align}\label{EQ:05}
 \lim \limits_{t \to 0}e^{- \alpha t}\int \limits_{E}h(y) p_t^{\xi}(x,dy) = h(x)
\end{align}
increasingly for each $x \in E$.
\end{enumerate}
Note that each Feller process is a Borel right process. 
Property \eqref{EQ:05} states that $h$ is an $\alpha$-excessive function for the transition semigroup. 
The following is a useful sufficient condition for \eqref{EQ:05}.
\begin{Remark}
 Let $(A,D(A))$ be the extended generator of $\xi$. If $h \in D(A)$ and there exists $\alpha \geq 0$ such that $Ah \leq \alpha h$, then \eqref{EQ:05} is satisfied.
\end{Remark}
For $x \in E$ and $f \in B_h(E)^+$ introduce the branching mechanism
 \begin{align*}
 \phi(x,f) &= c(x)f(x)^2 + b(x)f(x) - \int \limits_{E}f(y)\eta(x,dy)
 \\ &\ \ \ + \int \limits_{M_h(E)^{\circ}}\left( e^{- \langle f,\nu\rangle} - 1 + f(x) \nu(\{x\}) \right) H_1(x,d\nu)
\end{align*}
and immigration mechanism
 \begin{align}\label{EQ:13}
  \psi(f) = \langle f, \beta \rangle + \int \limits_{M_h(E)^{\circ}}\left( 1 - e^{- \langle f, \nu \rangle}\right)H_2(d\nu).
\end{align}
The corresponding parameters are supposed to satisfy the following conditions:
\begin{enumerate}[leftmargin = *]
 \item[(A2)] $b \in B_{\1}(E)$, $ch \in B_{\1}(E)^+$, $\eta$ a $\sigma$-finite kernel on $E$ and $H_1(x, d\eta)$ a $\sigma$-finite kernel from $E$ to $M_h(E)^{\circ}$ satisfying, for each $x \in E$,
 \[
  \int \limits_{E}h(y) \eta(x,dy) 
  + \int \limits_{M_h(E)^{\circ}}\left( \langle h,\nu \rangle \wedge \langle h, \nu \rangle^2 + \langle h, \nu_x \rangle \right) H_1(x,d\nu) \leq C h(x),
 \]
 for some constant $C > 0$, 
 where $\nu_x$ denotes the restriction of $\nu(dy)$ to $E \backslash \{x\}$.
 \item[(A3)] $\beta \in M_h(E)$ and $H_2$ is a Borel measure on $M_h(E)^{\circ}$ satisfying
 \begin{align}\label{EQ:09}
  \int \limits_{M_h(E)^{\circ}} 1 \wedge \langle h, \nu \rangle H_2(d\nu) < \infty.
 \end{align}
\end{enumerate}
The next result provides the construction of the measure-valued branching processes with immigration.
\begin{Theorem}
 Suppose that conditions (A1) -- (A3) are satisfied. Then
 \begin{enumerate}[leftmargin = *]
  \item[(a)] For each $f \in B_h(E)^+$ there exists a unique nonnegative solution $\R_+ \times E \ni (t,x) \longmapsto v_t(x,f)$ to 
  \begin{align}\label{EQ:00}
   v_t(x,f) = \int \limits_{E}f(y)p_t^{\xi}(x,dy)
   - \int \limits_{0}^{t} \int \limits_{E} \phi(y,v_s(\cdot,f)) p^{\xi}_{t-s}(x,dy) ds,
  \end{align}
  so that $t \longmapsto \| v_t(\cdot,f)\|_h$ is bounded on each bounded interval $[0,T]$.
  \item[(b)] Letting $V_tf(x) = v_t(x,f)$, we find that $(V_t)_{t \geq 0}$ is a comulant semigroup in the sense that $V_tV_s = V_{t+s}$ for all $t,s \geq 0$ and
  \begin{align}\label{EQ:30}
   V_tf(x) = \int \limits_{E}f(y) \lambda_t(x,dy) 
   + \int \limits_{M_h(E)^{\circ}}\left( 1 - e^{- \langle f, \nu \rangle} \right) L_t(x,d\nu),
  \end{align}
  where $\lambda_t(x,dy)$ and $L_t(x,d\nu)$ are transition kernels satisfying
  \[
   \int \limits_{E}h(y)\lambda_t(x,dy) + \int \limits_{M_h(E)^{\circ}} 1 \wedge \langle h, \nu \rangle L_t(x,d\nu) \leq C_t h(x), \qquad x \in E, \ \ t \geq 0,
  \]
  for some constant $C_t > 0$.
  \item[(c)] There exists a unique Markov kernel $P_t(\mu,d\nu)$ whose Laplace transform is, for $t \geq 0$, $\mu \in M_h(E)$ and $f \in B_h(E)^+$, given by
 \begin{align}\label{BRANCHING:LAPLACE}
  \int \limits_{M_h(E)}e^{- \langle f, \nu\rangle} P_t(\mu,d\nu)
  = \exp\left( - \langle V_tf, \mu \rangle - \int\limits_{0}^{t}\psi(V_sf)ds \right).
 \end{align}
 \end{enumerate}
\end{Theorem}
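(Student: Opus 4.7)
The plan is to adapt the classical construction of Dawson--Watanabe superprocesses with immigration (cf. Li \cite{L11}, Chapters 2, 5 and 9) from the setting of finite measures to the tempered-measure setting, with the weight $h$ entering at every estimate through assumption (A1).

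For (a), I would set up Picard iteration for \eqref{EQ:00} in the weighted space $\mathcal{E}_T$ of nonnegative Borel functions $v \colon [0,T] \times E \to \R_+$ with norm $\|v\|_T := \sup_{t \in [0,T]}\|v_t(\cdot)\|_h$. The key estimates come from (A2): writing
\[
 e^{- \langle f,\nu\rangle} - 1 + f(x)\nu(\{x\}) = \bigl[e^{- \langle f,\nu\rangle} - 1 + \langle f,\nu\rangle\bigr] - \langle f,\nu_x\rangle
\]
and using $|e^{-u}-1+u| \leq \frac{1}{2}(u^2 \wedge 2u)$, one obtains $|\phi(x,f)| \leq C_M h(x)$ and $|\phi(x,f) - \phi(x,g)| \leq C_M \|f-g\|_h h(x)$ whenever $\|f\|_h, \|g\|_h \leq M$. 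Combined with (A1) in the form $\int_E h(y) p_t^{\xi}(x,dy) \leq e^{\alpha t} h(x)$, a short-time contraction gives existence and uniqueness on a small interval, and a monotone iteration based on the nonnegativity of $\phi$'s linear part (after absorbing $b$ into the semigroup) extends the solution to all of $[0,T]$ with $t \mapsto \|v_t(\cdot,f)\|_h$ bounded.

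For (b), the semigroup identity $V_{t+s}f = V_t(V_s f)$ is an immediate consequence of uniqueness from (a), once one observes that both functions satisfy \eqref{EQ:00} on $[0,t]$ with initial datum $V_sf$ (by the Markov property of $p^\xi$). The Lévy--Khintchine-type representation \eqref{EQ:30} is the nontrivial part. I would first truncate the branching mechanism by replacing $H_1$ with its restriction to $\{\langle h,\nu\rangle \geq 1/n\}$, for which $\phi^{(n)}$ is a bounded branching mechanism; a direct pathwise construction of the corresponding superprocess on $M_h(E)$ (the estimate in (A2) together with (A1) and a Gronwall argument on $\E[\langle h,X_t\rangle]$ controls the state space) yields explicit kernels $\lambda_t^{(n)}$, $L_t^{(n)}$ realising \eqref{EQ:30} with the required bound. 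Letting $n \to \infty$, the uniform bound from (A2) and Proposition \ref{LEMMA:01}(c) deliver the representation for the limiting $V_t$.

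For (c), the right-hand side of \eqref{BRANCHING:LAPLACE} defines a functional on $B_h(E)^+$ that, thanks to \eqref{EQ:30}, (A3), \eqref{EQ:09} and dominated convergence, is continuous at $f=0$ with respect to bounded pointwise convergence. Approximating again by the truncated mechanisms (for which $P_t^{(n)}(\mu,\cdot)$ is genuinely a probability measure on $M_h(E)$ by the pathwise construction), one applies Proposition \ref{LEMMA:01}(c) to obtain $P_t(\mu,\cdot) \in \mathcal{P}(M_h(E))$ with the desired Laplace transform; uniqueness follows from Proposition \ref{LEMMA:01}(a), and the Markov (in fact Chapman--Kolmogorov) property of $(P_t)_{t\geq 0}$ is verified on Laplace transforms using the semigroup property of $V_t$ and the identity $\int_0^{t+s}\psi(V_r f)dr = \int_0^s \psi(V_r f)dr + \int_0^t \psi(V_r(V_s f))dr$. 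I expect the main obstacle to be step (b): the explicit Lévy--Khintchine form of $V_tf$ does not follow from the PDE \eqref{EQ:00} alone, and confirming that the approximating superprocesses genuinely stay in $M_h(E)$ (rather than escape to infinity in the $h$-weighted sense) is where assumption (A1) on $\alpha$-excessivity of $h$ plays an indispensable role.
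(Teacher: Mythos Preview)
Your outline is workable but takes a different route from the paper. The paper's key device is a \emph{Doob $h$-transform}: it sets $\widetilde{p}_t^{\xi}(x,dy) = e^{-\alpha t}\frac{h(y)}{h(x)}p_t^{\xi}(x,dy)$ and $\widetilde{\phi}(x,f) = h(x)^{-1}\phi(x,hf) - \alpha f(x)$, which reduces the tempered problem to the classical bounded case $h\equiv 1$. There \cite[Theorem~6.3]{L11} already supplies the solution $U_tf$ together with its L\'evy--Khintchine form \eqref{EQ:30}, and one simply pulls back via $V_tf(x) = h(x)\,U_t(h^{-1}f)(x)$, reading off $\lambda_t(x,dy) = \frac{h(x)}{h(y)}\widetilde{\lambda}_t(x,dy)$ and $L_t(x,d\nu) = h(x)\,\widetilde{L}_t(x,\cdot)\circ I^{-1}$ with $I(\nu)=h^{-1}\nu$. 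This bypasses your direct Picard iteration in the weighted space and, more significantly, delivers (b) immediately---no truncation of $H_1$ and no pathwise construction of an approximating superprocess are needed. For (c) the paper truncates the \emph{immigration} measure $H_2$ (not the branching mechanism), setting $H_2^{(n)} = \1_{\{\langle h,\nu\rangle > 1/n\}}H_2$ so that \cite[Proposition~9.17]{L11} applies at once; the limit $n\to\infty$ is then a one-line application of Proposition~\ref{LEMMA:01}.

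What each buys: the $h$-transform lets the paper quote existing results wholesale and keeps the argument to a few lines, at the cost of making the dependence on $h$ less transparent. Your direct approach is self-contained and makes the role of (A1)--(A2) in every estimate explicit, but reproves in the weighted setting what is already available in the unweighted one, and your step~(b) in particular (pathwise construction for truncated $H_1$, then limit) is substantially heavier than necessary. One point in your sketch for (a) that would need more care: nonnegativity of $v_t$ is not automatic under na\"ive Picard iteration since the nonlocal terms can make $\phi(x,f)$ negative; your phrase ``monotone iteration based on nonnegativity of $\phi$'s linear part'' gestures at the right idea but hides the actual order-preserving scheme (building the solution between $0$ and a supersolution) that makes it work.
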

This result can be obtained from \cite{L11}, details are postponed to the appendix.
\begin{Remark}
 Let $(A,D(A))$ be the extended generator of $\xi$. Then \eqref{EQ:00} is a mild formulation of \eqref{EQ:11}.
\end{Remark}
In this work we will always assume that conditions (A1) -- (A3) are satisfied and call the corresponding Markov process 
$(\xi, \phi, \psi)$-superprocess. Under some additional 
conditions one may also identify the generator of this process in terms of a martingale problem. 
The latter one is for some class of functions $F: M_h(E) \longrightarrow \R$ formaly given by
\begin{align*}
 \mathcal{A}F(\mu) &= \int \limits_{E}\mu(dx)\left( AF'(\mu;x) - b(x)F'(\mu;x) + \int \limits_{E}F'(\mu;y)\eta(x,dy) + c(x)F''(\mu;x) \right)
 \\ &\ \ \ + \int \limits_{E}\mu(dx) \int \limits_{M_h(E)^{\circ}} \left( F(\mu + \nu) - F(\mu) - F'(\mu;x)\nu(\{x\}) \right)H_1(x,d\nu)
 \\ &\ \ \ + \int \limits_{E}F'(\mu;x)\beta(dx) 
 + \int \limits_{M_h(E)^{\circ}}\left( F(\mu + \nu) - F(\mu) \right) H_2(d\nu)
\end{align*}
 where the variational derivaties are defined by
\[
 F'(\mu;x) = \lim \limits_{\e \to 0} \frac{F(\mu + \e \delta_x) - F(\mu)}{\e}, \ \ F''(\mu;x) = \lim \limits_{\e \to 0} \frac{F'(\mu + \e \delta_x;x) - F'(\mu;x)}{\e}.
\]
Since we will not make use of the corresponding martingale problem characterization, we refer the reader to 
\cite[Section 9]{L11} for additional details.

\subsection{Behavior of first moment}
In order to study the first moment of $P_t(\mu,d\nu)$ it is reasonable to rewrite the branching mechanism to
\begin{align*}
 \phi(x,f) &= c(x)f(x)^2 + b(x)f(x) - \int \limits_{E}f(y)\gamma(x,dy)
 \\ &\ \ \ + \int \limits_{M_h(E)^{\circ}}\left( e^{- \langle f,\nu\rangle} - 1 + \langle f, \nu \rangle \right) H_1(x,d\nu),
\end{align*}
where $\gamma(x,dy)$ is defined by
\[
 \gamma(x,dy) = \eta(x,dy) + \int \limits_{M_h(E)^{\circ}}\nu_x(dy)H_1(x,d\nu).
\]
In this way the contribution of the branching to the first moment is encoded in the properties of the kernel $\gamma$ and its associated transition operator 
\[
 \Gamma f(x) = \int \limits_{E}f(y) \gamma(x,dy), \qquad f \in B_h(E).
\]
Denote by $(R_t)_{t \geq 0}$ the unique semigroup on $B_h(E)$ obtained from
 \begin{align}\label{EQ:34}
  R_tf(x) = p_t^{\xi}f(x) + \int \limits_{0}^{t} \int \limits_{E} \left(\Gamma R_{t-s}f(y) - b(y)R_{t-s}f(y) \right)p_s^{\xi}(x,dy)ds.
 \end{align}
 Indeed, as $\Gamma$ and the multiplication operator $(-bf)(x) = -b(x)f(x)$ are both bounded on $B_h(E)$, existence of a solution can be classically obtained by iterating \eqref{EQ:34}. Uniqueness is a straightforward consequence of the Gronwall lemma. 
 Let us stress that $(R_t)_{t \geq 0}$ and $(p_t^{\xi})_{t \geq 0}$ need not to be strongly continous neither on $B_h(E)$ nor on $C_h(E)$. Moreover, $(R_t)_{t \geq 0}$ does not need to be conservative, i.e. $R_t 1 \neq 1$ is allowed. We will need the following simple properties of this semigroup.
\begin{Lemma}\label{LEMMA:06}
 This semigroup satisfies for each $t \geq 0$, $x \in E$ and $f \in B_h(E)^+$
 \begin{align}\label{EQ:33}
  R_tf(x) = \int \limits_{E}f(y)\lambda_t(x,dy) 
  + \int \limits_{M_h(E)^{\circ}}\langle f, \nu \rangle L_t(x,d\nu)
 \end{align}
 and 
 \[
  \frac{1}{\e}V_t(\e f)(x) \nearrow R_tf(x), \qquad \e \searrow 0.
 \]
\end{Lemma}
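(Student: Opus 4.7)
The plan is to mirror the two-part statement. First I would establish the monotone convergence $\varepsilon^{-1}V_t(\varepsilon f)(x)\nearrow W_tf(x)$ as $\varepsilon\searrow 0$, where $W_tf$ denotes the right-hand side of \eqref{EQ:33}; then I would show that $W_tf$ satisfies the defining equation \eqref{EQ:34}, so that uniqueness forces $W_tf = R_tf$. For the first step, substitute $\varepsilon f$ into the representation \eqref{EQ:30} from Theorem (b) and divide by $\varepsilon$ to get
\begin{align*}
 \frac{V_t(\varepsilon f)(x)}{\varepsilon} = \int_E f(y)\,\lambda_t(x,dy) + \int_{M_h(E)^{\circ}}\frac{1-e^{-\varepsilon \langle f,\nu\rangle}}{\varepsilon}\,L_t(x,d\nu).
\end{align*}
The identity $(1-e^{-\varepsilon y})/\varepsilon = \int_0^y e^{-\varepsilon s}\,ds$ makes both monotonicity in $\varepsilon\searrow 0$ and the pointwise limit $\langle f,\nu\rangle$ manifest, so monotone convergence yields $\varepsilon^{-1}V_t(\varepsilon f)(x)\nearrow W_tf(x)\in[0,\infty]$.

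For the second step, set $u^{\varepsilon}_s := \varepsilon^{-1}V_s(\varepsilon f)$ and insert $\varepsilon f$ into the mild equation \eqref{EQ:00}, using the form of $\phi$ in terms of $\Gamma$ given at the beginning of this subsection. A short calculation gives
\begin{align*}
 u^{\varepsilon}_t(x) = p_t^{\xi}f(x) + \int_0^t\!\!\int_E \bigl(\Gamma u^{\varepsilon}_s(y)- b(y)u^{\varepsilon}_s(y) - r^{\varepsilon}_s(y)\bigr)\,p^{\xi}_{t-s}(x,dy)\,ds,
\end{align*}
with the nonnegative residual
\begin{align*}
 r^{\varepsilon}_s(y) := \varepsilon c(y) u^{\varepsilon}_s(y)^2 + \frac{1}{\varepsilon}\int_{M_h(E)^{\circ}}\bigl(e^{-\varepsilon\langle u^{\varepsilon}_s,\nu\rangle}-1+\varepsilon\langle u^{\varepsilon}_s,\nu\rangle\bigr)H_1(y,d\nu).
\end{align*}
As $\varepsilon\searrow 0$, $u^{\varepsilon}_s\nearrow W_sf$ by step one, while the bounds $c(y)u^{\varepsilon}_s(y)^2\leq K\|u^{\varepsilon}_s\|_h^2 h(y)$ with $K := \sup_y c(y)h(y)$, and $0\leq e^{-z}-1+z\leq z\wedge z^2$ (applied with $z=\varepsilon\langle u^{\varepsilon}_s,\nu\rangle\leq \varepsilon\|u^{\varepsilon}_s\|_h\langle h,\nu\rangle$) together with (A2) show $r^{\varepsilon}_s(y)\to 0$ pointwise with an $\varepsilon$-independent integrable envelope. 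A dominated-convergence pass against $p^{\xi}_{t-s}(x,dy)\,ds$, using $p^{\xi}_t h\leq e^{\alpha t}h$ from (A1) to control the outer integration, then produces exactly \eqref{EQ:34} for $W_tf$, so uniqueness gives $W_tf=R_tf$.

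The main obstacle is the a priori bound $\sup_{\varepsilon\in(0,1]}\|u^{\varepsilon}_s\|_h < \infty$ uniformly on compact time intervals, which is needed both to render the dominating function integrable and to guarantee that $W_sf$ is finite-valued in $B_h(E)$. I would handle this by dropping the nonnegative residual in the display above to obtain the subsolution inequality $u^{\varepsilon}_t\leq p_t^{\xi}f + \int_0^t p^{\xi}_{t-s}(\Gamma u^{\varepsilon}_s - b u^{\varepsilon}_s)\,ds$, and then comparing with the iteration defining $R_t$. Since $b$ may change sign, the operator $Tu:=\Gamma u - bu$ is not a priori positivity-preserving on $B_h(E)$; this is addressed by the standard trick of rewriting $Tu = (\Gamma + \|b\|_{\1})u - (b+\|b\|_{\1})u$, absorbing the pure-multiplier piece into $p^{\xi}_t$ via a Feynman--Kac transform, and comparing the resulting positivity-preserving iterations for $R_t$ and $u^{\varepsilon}_t$ term by term to conclude $u^{\varepsilon}_t\leq R_tf\in B_h(E)$ uniformly in $\varepsilon$, which closes the argument.
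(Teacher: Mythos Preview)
Your argument is correct but takes a genuinely different route from the paper. The paper does not pass to the limit in the mild equation \eqref{EQ:00} at all; instead it invokes the $h$-transform set up in the proof of the construction theorem to reduce to the case $h\equiv 1$, where the cumulant semigroup $U_t$ and the linear semigroup $\widetilde{R}_t$ live on $B_{\1}(E)$ and the two statements $U_tf\leq\widetilde{R}_tf$ and $\varepsilon^{-1}U_t(\varepsilon f)\nearrow\widetilde{R}_tf$ are quoted directly from \cite[Propositions~2.18 and 2.24]{L11}. Transforming back via $R_tf(x)=h(x)\widetilde{R}_t(h^{-1}f)(x)$ and $V_tf(x)=h(x)U_t(h^{-1}f)(x)$ then yields both conclusions of the lemma simultaneously. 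Your approach has the merit of being self-contained and of making the limiting mechanism transparent, whereas the paper's approach is considerably shorter precisely because it outsources the hardest step --- the a~priori bound $u^{\varepsilon}_t\leq R_tf$, which you correctly identify as the main obstacle and handle by a Feynman--Kac/comparison argument --- to the cited reference (that bound is exactly the content of \cite[Proposition~2.18]{L11} in the bounded setting). One small remark on your comparison step: once $-(b+\|b\|_{\1})$ is absorbed into the semigroup, the resulting Volterra inequality with positive kernel $\widetilde{q}_{t-s}(\Gamma+\|b\|_{\1}I)$ can simply be iterated, the $n$-th remainder being $O(t^n/n!)$ in $\|\cdot\|_h$ since $u^{\varepsilon}$ is already locally bounded for each fixed $\varepsilon$ by Theorem~(a); this gives $u^{\varepsilon}_t\leq R_tf$ without needing to match iterations term by term.
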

The proof of this lemma is postponed to the appendix. Set 
\[
 r_t(x,dy) = \lambda_t(x,dy) + \int \limits_{M_h(E)^{\circ}}\nu(dy) L_t(x,dy).
\]
Using \eqref{EQ:33} one can show that for each $f \in B_h(E)$ and $t \geq 0$ 
\begin{align}\label{EQ:15}
 R_tf(x) = \int \limits_{E}f(y)r_t(x,dy), \qquad x \in E,
\end{align}
The adjoint action of $(R_t)_{t \geq 0}$ on $M_h(E)$ is then given by
\[
 R_t^*\nu(dy) = \int \limits_{E} r_t(x,dy)\nu(dx), \qquad t \geq 0.
\]
This semigroups describe the evolution of the first moment for the $(\xi, \phi,\psi)$-superprocess.
\begin{Proposition}\label{LEMMA:00}
 Suppose that $H_2$ satisfies 
 \begin{align}\label{EQ:06}
    \int \limits_{ \{ \nu \in M_h(E)^{\circ} \ : \ \langle h, \nu \rangle > 1 \} } \langle h, \nu \rangle H_2(d\nu) < \infty.
 \end{align}
 Then for each $\mu \in M_h(E)$ one has
  \[
   \int \limits_{M_h(E)} \nu(dx) P_t(\mu,d\nu)
   = R_t^*\mu(dx) + \int \limits_{0}^{t}R_s^*a(dx)ds,
  \]
  where both integrals are understood in a weak sense in $M_h(E)$, and
  \[
   a(dx) = \beta(dx) + \int \limits_{M_h(E)^{\circ}}\nu(dx) H_2(d\nu).
  \]
\end{Proposition}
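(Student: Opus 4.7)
The plan is to extract the first moment from \eqref{BRANCHING:LAPLACE} by a one-parameter linearisation at $f = 0$. Fix $f \in B_h(E)^+$ and $\varepsilon > 0$ and apply the Laplace identity to $\varepsilon f$; setting
\[
\Phi_\varepsilon := \langle V_t(\varepsilon f), \mu \rangle + \int_0^t \psi(V_s(\varepsilon f))\, ds,
\]
this gives
\[
\int_{M_h(E)} \frac{1-e^{-\varepsilon \langle f, \nu \rangle}}{\varepsilon}\, P_t(\mu, d\nu) = \frac{1-e^{-\Phi_\varepsilon}}{\varepsilon}.
\]
The elementary fact $\frac{1-e^{-\varepsilon y}}{\varepsilon} \nearrow y$ as $\varepsilon \searrow 0$ for every $y \geq 0$, combined with monotone convergence, turns the left-hand side into $\int \langle f, \nu \rangle P_t(\mu, d\nu)$. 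The whole proof therefore reduces to evaluating $\lim_{\varepsilon \searrow 0}(1-e^{-\Phi_\varepsilon})/\varepsilon$.

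As a preliminary step, I would check that $\Phi_\varepsilon \to 0$. Lemma~\ref{LEMMA:06} yields $V_t(\varepsilon f) \leq \varepsilon R_t f$ pointwise, so $\langle V_t(\varepsilon f), \mu \rangle = O(\varepsilon)$; and $0 \leq 1-e^{-\langle V_s(\varepsilon f), \nu \rangle} \leq 1 \wedge \bigl(\varepsilon \|R_s f\|_h \langle h, \nu \rangle\bigr)$, combined with (A3), handles the jump part of $\psi$ via dominated convergence, while the drift part is $O(\varepsilon)$. Consequently $\frac{1-e^{-\Phi_\varepsilon}}{\Phi_\varepsilon} \to 1$ and the task reduces to computing $\lim_{\varepsilon \searrow 0} \Phi_\varepsilon / \varepsilon$.

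The $\mu$-contribution is immediate: Lemma~\ref{LEMMA:06} and monotone convergence give $\frac{1}{\varepsilon} \langle V_t(\varepsilon f), \mu \rangle \nearrow \langle R_t f, \mu \rangle = \langle f, R_t^* \mu \rangle$. For the immigration piece I split
\[
\frac{\psi(V_s(\varepsilon f))}{\varepsilon} = \left\langle \frac{V_s(\varepsilon f)}{\varepsilon}, \beta \right\rangle + \int_{M_h(E)^{\circ}} \frac{1-e^{-\langle V_s(\varepsilon f), \nu \rangle}}{\varepsilon}\, H_2(d\nu).
\]
The first summand tends to $\langle R_s f, \beta \rangle$ by Lemma~\ref{LEMMA:06}. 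For the second, writing $y_\varepsilon := \langle V_s(\varepsilon f), \nu \rangle$ and factoring $\frac{1-e^{-y_\varepsilon}}{\varepsilon} = \frac{1-e^{-y_\varepsilon}}{y_\varepsilon} \cdot \frac{y_\varepsilon}{\varepsilon}$, both factors are nondecreasing as $\varepsilon \searrow 0$ (monotonicity of $V_s$ in $f$, decreasingness of $t \mapsto \frac{1-e^{-t}}{t}$, and Lemma~\ref{LEMMA:06}), converging to $1$ and $\langle R_s f, \nu \rangle$ respectively. The uniform dominator $\frac{1-e^{-y_\varepsilon}}{\varepsilon} \leq \langle R_s f, \nu \rangle \leq \|R_s f\|_h \langle h, \nu \rangle$ is $H_2$-integrable thanks to (A3) together with the additional hypothesis \eqref{EQ:06}; dominated convergence plus a Fubini swap against the $ds$-integral then yields $\frac{1}{\varepsilon}\int_0^t \psi(V_s(\varepsilon f))\, ds \to \int_0^t \langle f, R_s^* a \rangle\, ds$.

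Combining both limits, $\int \langle f, \nu \rangle P_t(\mu, d\nu) = \langle f, R_t^* \mu \rangle + \int_0^t \langle f, R_s^* a \rangle\, ds$ for every $f \in B_h(E)^+$. Since $\mathbbm{1}_B h \in B_h(E)^+$ for every Borel $B \subset E$ and $h$ is strictly positive, this uniquely identifies the first-moment measure on $E$ and proves the claim. The main technical hurdle is the $H_2$-integrability of $\langle R_s f, \nu \rangle$: without assumption \eqref{EQ:06}, only the weaker $1 \wedge \langle h, \nu \rangle$ bound from (A3) is available, the linearisation above fails, and the first moment may in fact be infinite.
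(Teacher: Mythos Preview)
Your proof is correct and follows essentially the same route as the paper: linearise the Laplace identity \eqref{BRANCHING:LAPLACE} at $f=0$ via the monotone limit $\tfrac{1}{\varepsilon}V_t(\varepsilon f)\nearrow R_t f$ from Lemma~\ref{LEMMA:06}, then pass to the limit in both the $\mu$-term and the immigration term using monotone/dominated convergence with the hypothesis \eqref{EQ:06}. If anything, you are more careful than the paper in isolating the intermediate step $\Phi_\varepsilon\to 0$ so that $(1-e^{-\Phi_\varepsilon})/\varepsilon\sim \Phi_\varepsilon/\varepsilon$, which the paper leaves implicit.
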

\begin{proof}
 If $h \equiv 1$, then the assertion follows from \cite[Proposition 9.11]{L11}. A similar result can be also deduced in the case $h \neq 1$.
 Since we could not find a precise reference, 
 for convenience of the reader a proof is given in the appendix.
\end{proof}
This result suggests to relate the long-time behavior of $(\xi, \phi, \psi)$-superprocesses with the long-time behavior of the semigroup $(R_t)_{t \geq 0}$.
\begin{Definition}
 The $(\xi, \psi, \phi)$-superprocess is called subcritical, 
 if there exists constants $C, \delta > 0$ such that
 \begin{align}\label{EQ:07}
  \int \limits_{E}h(y)r_t(x,dy) \leq C h(x)e^{- \delta t}, \qquad t \geq 0, \ \ x \in E.
 \end{align}
\end{Definition}
\begin{Remark}
 Property \eqref{EQ:07} implies that, for each $f \in B_h(E)^+$, 
 \[
  R_tf(x) \leq \| f\|_h \int \limits_{E}h(y) r_t(x,dy) 
  \leq \| f\|_h Ch(x) e^{-\delta t}, \qquad t \geq 0, \ \ x \in E.
 \]
 In the language of operator semigroups such a property states that $(R_t)_{t \geq 0}$ is uniformly exponentially stable.
 However, in contrast to the classical theory of operator semigroups we do not require that $(R_t)_{t \geq 0}$ is strongly continuous.
\end{Remark}
Such condition implies that mass located inside the system exponentially decreases in time. However, as additional mass also immigrates into the system, it is reasonable to expect the existence of at least one invariant measure. This is precisely the content of Section 3.
Let us first start with the analysis of the first moment.
\begin{Corollary}
 Let $P_t(\mu,d\nu)$ be the Markov kernel of a subcritical $(\xi,\phi,\psi)$-superprocess satisfying \eqref{EQ:06}. 
 Then for each $\mu \in M_h(E)$ one has
 \[
  \lim \limits_{t \to \infty}\int \limits_{M_h(E)} \nu(dx) P_t(\mu,d\nu) = \int \limits_{0}^{\infty} R_s^*a(dx) ds.
 \]
 in the weak topology on $M_h(E)$.
\end{Corollary}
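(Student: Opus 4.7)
The plan is to invoke Proposition \ref{LEMMA:00} and pass to the limit by testing against arbitrary $f \in C_h(E)$, which by \eqref{EQ:21} is precisely the definition of convergence in $M_h(E)$. Proposition \ref{LEMMA:00} combined with \eqref{EQ:15} yields
\[
 \int_{M_h(E)} \langle f,\nu\rangle\, P_t(\mu,d\nu) = \langle R_t f,\mu\rangle + \int_0^t \langle R_s f,a\rangle\, ds,
\]
so the problem decouples into controlling the transport term $\langle R_t f,\mu\rangle$ and the immigration integral $\int_0^t \langle R_s f,a\rangle\, ds$ separately.

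For the transport term, subcriticality \eqref{EQ:07} together with the kernel representation \eqref{EQ:15} gives the pointwise bound $|R_t f(x)| \leq C\|f\|_h h(x) e^{-\delta t}$ valid for every $f \in B_h(E)$, whence
\[
 |\langle R_t f,\mu\rangle| \leq C\|f\|_h e^{-\delta t}\langle h,\mu\rangle \longrightarrow 0
\]
as $t\to \infty$, since $\mu \in M_h(E)$ forces $\langle h,\mu\rangle < \infty$. For the immigration integral the same bound reads $|\langle R_s f,a\rangle| \leq C\|f\|_h e^{-\delta s}\langle h,a\rangle$, and dominated convergence will yield
\[
 \int_0^t \langle R_s f,a\rangle\, ds \longrightarrow \int_0^\infty \langle R_s f,a\rangle\, ds = \Big\langle f, \int_0^\infty R_s^* a\, ds\Big\rangle,
\]
once it is verified that $\langle h,a\rangle < \infty$. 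This last fact decomposes as $\langle h,a\rangle = \langle h,\beta\rangle + \int_{M_h(E)^\circ} \langle h,\nu\rangle H_2(d\nu)$; the first summand is finite by (A3) since $\beta \in M_h(E)$, while the second is handled by splitting $\{\langle h,\nu\rangle \leq 1\}$ (controlled by \eqref{EQ:09}) and $\{\langle h,\nu\rangle > 1\}$ (controlled by \eqref{EQ:06}).

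The only genuine obstacle is a bookkeeping check: one must confirm that $\int_0^\infty R_s^* a\, ds$ is a bona fide element of $M_h(E)$. Taking $f = h$ in the above chain yields $\int_0^\infty \langle R_s h,a\rangle\, ds \leq C\langle h,a\rangle/\delta < \infty$, so the candidate limiting measure has finite $h$-integral and hence lies in $M_h(E)$. Combining the two convergences, extending from $C_h(E)^+$ to $C_h(E)$ by linearity, and applying the characterisation \eqref{EQ:21} of weak convergence in $M_h(E)$ completes the proof.
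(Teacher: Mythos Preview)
Your proof is correct and follows essentially the same approach as the paper: invoke the first-moment formula from Proposition \ref{LEMMA:00}, bound $\langle R_t f,\mu\rangle$ and $\langle R_s f,a\rangle$ via the subcriticality estimate \eqref{EQ:07}, and conclude weak convergence in $M_h(E)$. The paper's proof is terser---it records the two exponential bounds and leaves the remaining steps implicit---whereas you spell out the verification that $\langle h,a\rangle < \infty$ (via (A3), \eqref{EQ:09} and \eqref{EQ:06}) and that the limiting measure lies in $M_h(E)$, which is a welcome addition.
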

\begin{proof}
 Take $f \in B_h(E)$, then
 $\langle f, R_t\mu \rangle = \langle R_t f, \mu \rangle 
  \leq C \| f \|_h e^{- \delta t} \langle h,\mu \rangle$
 and analogously we obtain
 \begin{align*}
  \langle f, R_s^*a \rangle \leq C \| f\|_h \langle h, \beta \rangle e^{- \delta s} + C \|f \|_h e^{-\delta s}\int \limits_{M_h(E)^{\circ}} \langle h, \nu \rangle H_2(d\nu).
 \end{align*}
 This readily implies the assertion.
\end{proof}
Let us close this section with a Lyapunov-type condition 
implying \eqref{EQ:07}.
\begin{Proposition}
 Suppose that there exists $\delta > 0$ such that
  \begin{align}\label{EQ:20}
   Ah(x) + \Gamma h(x) \leq b(x)h(x) - \delta h(x), \qquad x \in E,
  \end{align}
  and assume that $h$ satisfies
  \[
   R_th(x) = h(x) + \int \limits_{0}^{t} R_s\left( Ah - bh + \Gamma h \right)(x) ds, \qquad x \in E, \ \ t \geq 0,
  \]
  where it is implicitly assumed that the integral on the right-hand side exists.
  Then \eqref{EQ:07} holds with $C = 1$.
\end{Proposition}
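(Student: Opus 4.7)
The plan is to show $R_t h(x) \leq h(x) e^{-\delta t}$ for all $t \geq 0$ and $x \in E$, since by \eqref{EQ:15} we have $R_t h(x) = \int_E h(y) r_t(x, dy)$, so this is exactly \eqref{EQ:07} with $C = 1$. Writing $g := Ah - bh + \Gamma h$, the Lyapunov bound \eqref{EQ:20} takes the pointwise form $g(x) \leq -\delta h(x)$, and the hypothesized integral identity reads
\[
R_t h(x) = h(x) + \int_0^t R_s g(x) \, ds.
\]

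Next I would invoke the positivity of the first moment semigroup: by \eqref{EQ:15}, each $R_s$ is represented by integration against the positive kernel $r_s(x, \cdot)$ and is therefore monotone in the pointwise order. Combined with linearity this yields
\[
R_s g(x) \leq R_s(-\delta h)(x) = -\delta R_s h(x), \qquad s \geq 0, \ x \in E.
\]
Fix $x \in E$ and set $u(t) := R_t h(x)$. The implicit local integrability of $s \mapsto R_s g(x)$ built into the hypothesis, together with the integral equation, make $u$ absolutely continuous on every compact interval, with derivative $u'(t) = R_t g(x) \leq -\delta u(t)$ for almost every $t \geq 0$.

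The argument would then finish with a standard integrating factor: the map $t \mapsto e^{\delta t} u(t)$ is absolutely continuous with a.e.\ derivative $e^{\delta t}(u'(t) + \delta u(t)) \leq 0$, hence nonincreasing, and so $e^{\delta t} u(t) \leq u(0) = h(x)$, which is the sought-for bound. The only point that requires a modicum of care is the monotonicity step applied to the sign-changing function $g$, but this is immediate once one checks that $g$ lies in the domain on which the kernel representation of $R_s$ is valid; the hypothesis that the integral $\int_0^t R_s g(x) \, ds$ exists already presupposes the right integrability. Beyond this, the proof consists only of pointwise comparison and Gronwall's inequality, so I do not expect any substantive obstacle.
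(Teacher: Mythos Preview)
Your proposal is correct and follows essentially the same argument as the paper. The paper carries out the integrating-factor step in a single line by applying the product rule to $e^{\delta t}R_th(x)$ and then using positivity of $R_s$ on the nonpositive function $Ah - bh + \Gamma h + \delta h$, whereas you phrase the same computation as a differential inequality $u'(t)\leq -\delta u(t)$ followed by Gronwall; these are the same proof.
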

\begin{proof}
 Using the product rule and then \eqref{EQ:20} we find that
 \[
  e^{\delta t}R_th(x) = h(x) + \int \limits_{0}^{t}e^{\delta s} R_s\left( Ah - bh + \Gamma h + \delta h \right) ds
  \leq h(x),
 \]
 where we have used that $R_s$ is a positive operator. 
 Hence $R_th(x) \leq e^{- \delta t}h(x)$ which proves the assertion.
\end{proof}

\section{Invariant measure and stationarity}
In this section we provide a sufficient condition for the existence and uniqueness of an invariant measure for a subcritical $(\xi,\phi,\psi)$-superprocess with Markov kernel $P_t(\mu,d\nu)$.
Recall that $\pi \in \mathcal{P}(M_h(E))$ is an invariant measure, if 
\begin{align}\label{EQ:17}
 \int \limits_{E} P_t(\mu, d\nu) \pi(d\mu) = \pi(d\nu), \qquad t \geq 0.
\end{align}
The following is our main result of this section.
\begin{Theorem}\label{TH:00}
 Suppose that the $(\xi,\phi,\psi)$-superprocess is subcritical
 and
 \begin{align}\label{LOGMOMENT}
  \int \limits_{ \{ \nu \in M_h(E)^{\circ} \ : \ \langle h,\nu \rangle > 1 \} } \log(\langle h, \nu \rangle ) H_2(d\nu) < \infty.
 \end{align}
 Then there exists a unique invariant measure $\pi \in \mathcal{P}(M_h(E))$. Moreover, for each $\mu \in \mathcal{P}(M_h(E))$,
 $P_t(\mu,\cdot) \longrightarrow \pi$ weakly as $t \to \infty$, 
 and 
\begin{align}\label{EQ:03}
  \mathcal{L}_{\pi}(f) = \exp\left( - \int \limits_{0}^{\infty}\psi(V_sf)ds \right),
  \qquad f \in B_h(E)^+.
 \end{align}
\end{Theorem}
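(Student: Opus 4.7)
The plan is to follow the branching-process route of showing that the Laplace functional on the right-hand side of \eqref{EQ:03} is itself a Laplace transform of a probability measure, then to verify that this measure is invariant, and finally to obtain uniqueness together with weak convergence from Proposition \ref{LEMMA:01}.

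\textbf{Step 1: linearisation bound and integrability of the candidate functional.} The starting point is \eqref{BRANCHING:LAPLACE}. From Lemma \ref{LEMMA:06} the inequality $V_tf(x) \leq R_tf(x)$ is obtained by letting $\e = 1$ in the monotone relation $\e^{-1}V_t(\e f) \nearrow R_t f$ (since the left-hand side is monotone in $\e$). Combining this with subcriticality \eqref{EQ:07} yields
\[
 V_tf(x) \leq R_tf(x) \leq C\|f\|_h e^{-\delta t} h(x), \qquad t \geq 0,\ x \in E.
\]
In particular $\langle V_tf,\mu\rangle \to 0$ as $t\to\infty$ for every $\mu \in M_h(E)$ and every $f \in B_h(E)^+$. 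The next and technically most important task is to prove that $I(f) := \int_0^\infty \psi(V_sf)\,ds < \infty$. I split the immigration integral in \eqref{EQ:13} over the two regions $\{\langle h,\nu\rangle \leq 1\}$ and $\{\langle h,\nu\rangle > 1\}$. On the first region the elementary bound $1-e^{-x}\leq x$ together with the previous linearisation gives an integrand dominated by $C\|f\|_h e^{-\delta s}\langle h,\nu\rangle$, whose double integral is controlled by condition (A3). On the second region I use $1-e^{-x}\leq 1\wedge x$ and compute
\[
 \int_0^\infty 1\wedge\bigl(C\|f\|_h e^{-\delta s}\langle h,\nu\rangle\bigr)\,ds \leq \frac{1}{\delta}\bigl(1 + \log^{+}(C\|f\|_h \langle h,\nu\rangle)\bigr),
\]
so an application of Fubini reduces the finiteness on $\{\langle h,\nu\rangle>1\}$ to the log-moment assumption \eqref{LOGMOMENT} (after absorbing the constant $C\|f\|_h$ into the logarithm and noting that $H_2$ has finite mass on this set by (A3)). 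Finally $\int_0^\infty \langle V_sf,\beta\rangle\,ds$ is handled by $\beta \in M_h(E)$ and the exponential decay.

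\textbf{Step 2: existence and invariance of $\pi$.} Steps 1 imply that for every $\mu\in M_h(E)$ and every $f\in B_h(E)^+$
\[
 \lim_{t\to\infty}\int_{M_h(E)} e^{-\langle f,\nu\rangle} P_t(\mu,d\nu) = \exp\Bigl(-\int_0^\infty \psi(V_sf)\,ds\Bigr) =: \mathcal{L}(f).
\]
To apply Proposition \ref{LEMMA:01}(c) I fix $\mu = 0$ and a sequence $t_n \to \infty$ and set $\rho_n := P_{t_n}(0,\cdot)$. The continuity of $\mathcal{L}$ at $f=0$ with respect to bounded pointwise convergence follows from the dominated convergence theorem applied inside the integral defining $I(f)$, using the same integrable majorants built in Step 1. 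This produces $\pi \in \mathcal{P}(M_h(E))$ with $\mathcal{L}_\pi = \mathcal{L}$, giving \eqref{EQ:03}. Invariance is then a direct consequence of the cumulant property $V_{s+t} = V_s V_t$: for any $f\in B_h(E)^+$,
\[
 \int \mathcal{L}_{P_t(\mu,\cdot)}(f)\,\pi(d\mu) = \mathcal{L}_\pi(V_tf)\exp\Bigl(-\int_0^t\psi(V_sf)\,ds\Bigr) = \exp\Bigl(-\int_0^\infty\psi(V_sf)\,ds\Bigr) = \mathcal{L}_\pi(f),
\]
after a change of variables $s\mapsto s+t$ in $\mathcal{L}_\pi(V_tf)$; uniqueness of the Laplace transform (Proposition \ref{LEMMA:01}(a)) gives \eqref{EQ:17}.

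\textbf{Step 3: uniqueness and weak convergence.} Weak convergence $P_t(\mu,\cdot)\to \pi$ follows from Proposition \ref{LEMMA:01}(b) by restricting the pointwise convergence obtained above to $f\in C_h(E)^+$. For uniqueness, let $\pi'$ be any invariant probability. Invariance yields, for every $f\in C_h(E)^+$,
\[
 \mathcal{L}_{\pi'}(f) = \int_{M_h(E)} \exp\Bigl(-\langle V_tf,\mu\rangle - \int_0^t\psi(V_sf)\,ds\Bigr)\pi'(d\mu).
\]
Since the integrand is bounded by $1$ and converges pointwise in $\mu$ to $\mathcal{L}_\pi(f)$ (which is independent of $\mu$), dominated convergence gives $\mathcal{L}_{\pi'} = \mathcal{L}_\pi$ on $C_h(E)^+$, hence $\pi' = \pi$ by Proposition \ref{LEMMA:01}(a).

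\textbf{Main obstacle.} The delicate point is the second step of the integrability argument: the precise matching between the subcritical exponential decay $e^{-\delta s}$ and the log-moment tail of $H_2$. Any weaker assumption on $H_2$ would lead to a divergent time integral of $\psi(V_sf)$, so \eqref{LOGMOMENT} is essentially used in a sharp way to force $I(f)<\infty$ and thus to give meaning to the candidate Laplace transform \eqref{EQ:03}.
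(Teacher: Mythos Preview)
Your proof is correct and follows the same overall architecture as the paper: bound $V_tf$ by the linear semigroup $R_tf$, use subcriticality to force the candidate exponent $\int_0^\infty\psi(V_sf)\,ds$ to be finite, identify the limit as a Laplace transform via Proposition \ref{LEMMA:01}(c), then deduce invariance from $V_{s+t}=V_sV_t$ and uniqueness/weak convergence by passing to the limit in the invariance relation. Two local differences are worth noting. First, you obtain $V_tf\leq R_tf$ from the monotonicity in Lemma \ref{LEMMA:06}, whereas the paper derives it from Jensen's inequality applied to the branching process without immigration; both are equally short. Second, and more interestingly, for the integrability of $\int_0^\infty\psi(V_sf)\,ds$ you integrate in time first and use the explicit estimate $\int_0^\infty 1\wedge(ae^{-\delta s})\,ds\leq\delta^{-1}(1+\log^+a)$, while the paper instead proves the pointwise bound $\psi(V_sf)\leq C\|f\|_h e^{-\delta s}$ (Lemma \ref{LEMMA:05}) via the inequality $\log(1+xy)\leq Cx(1+\log(1+y))$. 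Your Fubini argument is perfectly adequate for Theorem \ref{TH:00}, but note that the paper's stronger pointwise decay is not wasted: it is precisely what drives the exponential rate in Theorem \ref{TH:01}, so if you continue to that result you will need to upgrade your estimate.
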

A similar formula to \eqref{EQ:03} was obtained by Stannat \cite{S03a, S03b} for the case of a compact location space $E$ with $H_2 = 0$,
$\phi$ given by \eqref{EQ:19} and $h \equiv 1$.
Although the proof of Theorem \ref{TH:00} does not look very difficult, the particular case $E = \{1,\dots, d\}$ with general (nonlocal) branching and immigration mechanism was only recently established in \cite{JKR18} where affine processes on the canonical state space have been studied.

Without immigration (i.e. $\psi \equiv 0$) one has $\mathcal{L}_{\pi}(f) \equiv 1$ and hence $\pi = \delta_0$, where $0$ denotes the zero measure. This is not surprising as we work with subcritical superprocesses. Hence an introduction of immigration in not only possible, it is also necessary for the existence of a nontrivial invariant measure. A similar effect was recently observed in \cite{FK18} for birth-and-death processes in the continuum.

The rest of this section is devoted to the proof of Theorem \ref{TH:00}. Suppose that \eqref{LOGMOMENT} is satisfied.
Here and below we let $V_t$ be the unique solution to \eqref{EQ:00}.
Suppose that the corresponding superprocess is subcritical and let $\delta > 0$ be given by \eqref{EQ:07}.
We start with a simple but crucial stability estimate for solutions to 
\eqref{EQ:00}.
\begin{Proposition}
 There exists a constant $C > 0$ such that, for each $f \in B_h(E)^+$, one has
 \begin{align}\label{EXPONENTIAL:DECAY}
  V_tf(x) \leq C \| f \|_{h}h(x) e^{- \delta t}, \qquad t \geq 0, \ \ x \in E.
 \end{align}
\end{Proposition}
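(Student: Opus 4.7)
The plan is to bound the nonlinear cumulant $V_tf$ by the linearized first-moment semigroup $R_t f$, and then invoke the subcriticality assumption \eqref{EQ:07} to read off the exponential decay. This is morally the statement that the cumulant semigroup is pointwise dominated by its linearization, which is the standard ``Kolmogorov bound'' for branching mechanisms.

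First I would use Lemma \ref{LEMMA:06}. The second assertion there says that $\varepsilon^{-1} V_t(\varepsilon f)(x) \nearrow R_t f(x)$ as $\varepsilon \searrow 0$. Because the convergence is monotone increasing, every term in the net is dominated by the limit, so in particular, taking $\varepsilon = 1$,
\[
 V_t f(x) \;\leq\; R_t f(x), \qquad t \geq 0,\ x \in E,\ f \in B_h(E)^+.
\]
This is the key step; the nonlinear contributions to $\phi$ (the quadratic term $c(x)f(x)^2$ and the integral term $e^{-\langle f,\nu\rangle}-1+\langle f,\nu\rangle$) are both nonnegative on $B_h(E)^+$, so the linearization dominates.

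Next, by \eqref{EQ:15} the semigroup $R_t$ is represented by the kernel $r_t(x,dy)$, and the definition of $\|\cdot\|_h$ yields
\[
 R_t f(x) \;=\; \int_E f(y)\, r_t(x,dy)
 \;\leq\; \|f\|_h \int_E h(y)\, r_t(x,dy).
\]
Finally, the subcriticality hypothesis \eqref{EQ:07} gives $\int_E h(y)\, r_t(x,dy) \leq C h(x) e^{-\delta t}$. Chaining the three inequalities produces the asserted estimate with the very same constant $C$ as in \eqref{EQ:07}.

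There is no serious obstacle here: the only nontrivial ingredient is the monotone comparison from Lemma \ref{LEMMA:06}, whose proof is deferred to the appendix; everything else is a direct application of the definitions. One should simply be careful that the three inputs (monotonicity, kernel representation, temperedness bound) are quoted in the correct form, and that $f \in B_h(E)^+$ is what is needed for $\varepsilon f$ to remain in the admissible class of initial data for the mild equation \eqref{EQ:00}.
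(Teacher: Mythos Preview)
Your argument is correct, and the conclusion $V_tf(x) \leq R_tf(x)$ followed by the subcriticality bound is exactly what the paper uses as well. The only genuine difference is in how the comparison $V_tf \leq R_tf$ is obtained. You extract it from the monotone limit in Lemma~\ref{LEMMA:06}: since $\varepsilon^{-1}V_t(\varepsilon f)\nearrow R_tf$ as $\varepsilon\searrow 0$, the value at $\varepsilon=1$ lies below the limit. The paper instead introduces the kernel $Q_t(\mu,d\nu)$ of the $(\xi,\phi,0)$-superprocess and applies Jensen's inequality to the convex map $t\mapsto e^{-t}$:
\[
 e^{-\langle V_tf,\mu\rangle}
 = \int_{M_h(E)} e^{-\langle f,\nu\rangle}\,Q_t(\mu,d\nu)
 \;\geq\; \exp\Bigl(-\int_{M_h(E)}\langle f,\nu\rangle\,Q_t(\mu,d\nu)\Bigr)
 = e^{-\langle R_tf,\mu\rangle},
\]
and then reads off the pointwise bound by taking $\mu=\delta_x$. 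Both routes encode the same convexity (the concavity of $1-e^{-a}$ underlies the monotonicity you cite, and Jensen exploits the convexity of $e^{-a}$ directly), so neither is deeper than the other; your version has the mild advantage of quoting a lemma already on record rather than invoking the auxiliary process $Q_t$.
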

\begin{proof}
 Let $Q_t(\mu,d\nu)$ be the Markov kernel
 of a $(\xi, \phi, \psi = 0)$-superprocess given by \eqref{BRANCHING:LAPLACE}. 
 By Jensen inequality applied to the convex function $t \longmapsto e^{-t}$ we obtain
 \begin{align*}
  \exp\left( - \langle V_tf, \mu \rangle \right) 
  &= \int \limits_{M_h(E)} e^{- \langle f, \nu \rangle} Q_t(\mu,d\nu)
  \\ &\geq \exp\left(- \int \limits_{M_h(E)} \langle f, \nu \rangle Q_t(\mu,d\nu) \right)
  = \exp\left( - \langle R_tf, \mu \rangle \right),
 \end{align*}
 where the last equality follows from Lemma \ref{LEMMA:00}.
 From this we conclude that 
 $V_tf(x) \leq R_tf(x) \leq \| f\|_h C h(x)e^{- \delta t}$
 which proves the assertion.
\end{proof}
 Based on this stability estimate we prove the following.
\begin{Lemma}\label{LEMMA:05}
 There exists a constant $C > 0$ such that, for each $f \in B_h(E)^+$, one has
 \begin{align}\label{EQ:02}
  \psi(V_sf) \leq C \| f \|_{h} e^{- \delta s}, \qquad s \geq 0
 \end{align}
 and hence $\int_{0}^{\infty}\psi(V_sf)ds \leq C \delta^{-1}\| f\|_h < \infty$.
\end{Lemma}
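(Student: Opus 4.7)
The plan is to decompose $\psi$ into its drift and jump components and estimate each separately using the pointwise stability bound \eqref{EXPONENTIAL:DECAY}. For the drift, since $\beta\in M_h(E)$, we have directly
\[
 \langle V_sf,\beta\rangle \leq \|V_sf\|_h\,\langle h,\beta\rangle \leq C\|f\|_h\,e^{-\delta s}\,\langle h,\beta\rangle,
\]
which is already of the required form.

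For the jump part, I would combine the elementary inequality $1-e^{-y}\leq y\wedge 1$ with \eqref{EXPONENTIAL:DECAY} to obtain
\[
 1-e^{-\langle V_sf,\nu\rangle}\leq \bigl(C\|f\|_h\,e^{-\delta s}\,\langle h,\nu\rangle\bigr)\wedge 1,
\]
and then split the integration domain into the small-mass region $\{\langle h,\nu\rangle\leq 1\}$ and the large-mass region $\{\langle h,\nu\rangle > 1\}$. On the small-mass region the linear bound $y\wedge 1 \leq y$ applies, and \eqref{EQ:09} immediately produces a contribution of size $\|f\|_h e^{-\delta s}$. On the large-mass region I would further split at the threshold $\langle h,\nu\rangle = 1/(C\|f\|_h e^{-\delta s})$: below the threshold the linear bound still dominates, and above it the trivial bound $1$ applies, with the tail mass controlled by Markov's inequality applied to $\log\langle h,\nu\rangle$ and the hypothesis \eqref{LOGMOMENT}.

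Summing the three contributions yields the pointwise estimate \eqref{EQ:02}, and the integral bound $\int_0^\infty \psi(V_sf)\,ds \leq C\delta^{-1}\|f\|_h$ follows by integrating $e^{-\delta s}$ in $s$. The main obstacle I expect is the large-mass region: the log-moment \eqref{LOGMOMENT} is a strictly weaker tail hypothesis than the corresponding first-moment condition on $\langle h,\nu\rangle$ against $H_2$, so one cannot simply pull the factor $\langle h,\nu\rangle$ past the integral to inherit the exponential decay of $V_sf$. The delicate step is to balance the cutoff threshold against both the linear bound and the logarithmic Markov estimate in such a way that the exponential rate $e^{-\delta s}$ survives in the end; this is precisely where the tail hypothesis \eqref{LOGMOMENT} enters in an essential way and explains why it is the right moment condition for Theorem~\ref{TH:00}.
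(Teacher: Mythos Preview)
Your decomposition into the drift term, the small-mass jump region $\{\langle h,\nu\rangle\leq 1\}$, and the large-mass region $\{\langle h,\nu\rangle>1\}$ matches the paper exactly, and the first two pieces are handled in the same way.  The divergence is on the large-mass region.  There the paper does \emph{not} cut at a threshold; instead it uses $1\wedge z\leq C\log(1+z)$ and then applies a two-variable logarithmic inequality quoted from \cite{FJR18},
\[
 \log(1+xy)\;\leq\;C\,x\bigl(1+\log(1+y)\bigr),
\]
with $x=C\|f\|_{h}e^{-\delta s}$ and $y=\langle h,\nu\rangle$.  This factors out $e^{-\delta s}$ in one stroke and leaves the $H_2$-integral of $1+\log(1+\langle h,\nu\rangle)$, which is finite by \eqref{EQ:09} and \eqref{LOGMOMENT}.

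Your threshold--Markov route, as written, has a genuine gap: it does not recover the exponential rate in \eqref{EQ:02}.  With $T_s=(C\|f\|_h e^{-\delta s})^{-1}$, Markov's inequality applied to $\log\langle h,\nu\rangle$ yields only
\[
 H_2\bigl(\{\langle h,\nu\rangle>T_s\}\bigr)\;\leq\;\frac{1}{\log T_s}\int_{\{\langle h,\nu\rangle>1\}}\log\langle h,\nu\rangle\,H_2(d\nu)\;\sim\;\frac{\mathrm{const}}{\delta s},
\]
which is polynomial in $s$, not exponential.  Below the threshold the situation is no better: since only the log-moment of $\langle h,\nu\rangle$ against $H_2$ is assumed (not the first moment), the integral $\int_{\{1<\langle h,\nu\rangle\leq T_s\}}\langle h,\nu\rangle\,H_2(d\nu)$ may grow without bound as $T_s\to\infty$; a layer-cake argument combined with the same Markov estimate again gives at best a contribution of order $1/(\delta s)$ after multiplying by $e^{-\delta s}$.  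No choice of cutoff balances the two pieces to $O(e^{-\delta s})$.  Your argument would still deliver the qualitative statement $\int_0^\infty\psi(V_sf)\,ds<\infty$ needed for Theorem~\ref{TH:00}, but not the pointwise exponential bound \eqref{EQ:02} that the lemma claims.
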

\begin{proof}
 Let $f \in B_h(E)^+$, $s \geq 0$ and $\nu \in M_h(E)^{\circ}$.
 Denote by $C > 0$ a generic constant which may vary from line to line and is independent of $s,\nu,f$. Then we obtain
 \begin{align*}
  \left( 1 - e^{- \langle V_sf, \nu \rangle } \right)
  &\leq \min \left\{1, \langle V_s f, \nu \rangle \right\}
  \\ &\leq \1_{ \{ \langle h, \nu \rangle \leq 1\} } \langle V_sf, \nu \rangle + C\1_{ \{ \langle h, \nu \rangle > 1 \} } \log( 1 + \langle V_s f, \nu \rangle )
  \\ &=: I_1 + I_2.
 \end{align*}
 For the first term we obtain from  \eqref{EXPONENTIAL:DECAY}
 \[
  I_1 \leq \1_{ \{ \langle h, \nu \rangle \leq 1\} } C \| f \|_{\infty} e^{- \delta s} \langle h, \nu \rangle.
 \]
 For the second term we obtain
 \begin{align*}
  I_2 &\leq \1_{ \{ \langle h, \nu \rangle > 1\} }\log( 1 + C \| f \|_{\infty}e^{- \delta s}\langle h, \nu \rangle )
  \\ &\leq \1_{ \{ \langle h, \nu \rangle > 1\} }C\| f \|_{\infty}e^{- \delta s}\left( 1 + \log( 1 + \langle h, \nu \rangle ) \right),
 \end{align*}
 where we have used, for $x = C \| f \|_{\infty}e^{-\delta s}$ and $y = \langle h, \nu \rangle$, the elementary estimate
 \begin{align}
  \notag \log( 1 + xy) 
  &\leq C \log(1 + x) \log(1+y) + C \min\{ \log(1 + x), \log(1+y)\}
  \\ \label{EQ:16} &\leq C x\left( 1 + \log(1+y) \right),
 \end{align}
 see the appendix in \cite{FJR18} for a proof.
 Inserting this into the definition of $\psi$ gives
 \begin{align*}
  \psi(V_sf) &= \langle V_sf, \beta \rangle + \int \limits_{M_h(E)^{\circ}}\left( 1 - e^{-\langle V_sf, \nu \rangle}\right)H_2(d\nu)
  \\ &\leq C\| f\|_h e^{-\delta s}\langle h,\beta\rangle
  + C\| f\|_h e^{-\delta s}\int \limits_{\{ \langle h,\nu \rangle \leq 1\}} \langle h, \nu \rangle H_2(d\nu)
  \\ &\ \ \ + C\| f\|_h e^{-\delta s} \int \limits_{ \{ \langle h,\nu \rangle > 1 \}} \left( 1 + \log(1 + \langle h,\nu \rangle )\right) H_2(d\nu).
 \end{align*}
 In view of \eqref{LOGMOMENT} and \eqref{EQ:09} we find that the right-hand side is finite, i.e. \eqref{EQ:02} holds. This proves the assertion.
\end{proof}
We are now prepared to complete the proof.

\begin{proof}[Proof of Theorem \ref{TH:00}]
Using  \eqref{EXPONENTIAL:DECAY} combined with Lemma \ref{LEMMA:05}
shows that, for each $f \in B_h(E)^+$,
\begin{align}\label{EQ:12}
 \int \limits_{M_h(E)}e^{- \langle f, \nu \rangle} P_t(\mu, d\nu)
 \longrightarrow \exp\left( - \int \limits_{0}^{\infty}\psi(V_sf)ds \right), 
 \qquad t \to \infty.
\end{align}
Since $\int_{0}^{\infty}\psi(V_sf)ds \leq C \delta^{-1}\| f\|_h$, one finds that the right-hand side is continuous at $f = 0$ and therefore
the Laplace transform of a unique $\pi \in \mathcal{P}(M_h(E))$,
apply e.g. Proposition \ref{LEMMA:01}.
It remains to show that $\pi$ is the unique invariant measure.

In order to show that $\pi$ is invariant, 
we use a similar argument to \cite{JKR18}. 
Fix $f \in B_h(E)^+$ and $t \geq 0$.
Then using $V_sV_t = V_{s+t}$ for $s,t \geq 0$ yields
\begin{align*}
 &\ \int \limits_{M_h(E)} e^{- \langle f, \nu \rangle} \left( \int \limits_{M_h(E)} P_t(\mu,d\nu) \pi(d\mu) \right)
  \\ &= \int \limits_{M_h(E)} \exp\left( - \langle V_tf, \mu \rangle - \int \limits_{0}^{t}\psi(V_sf)ds \right) \pi(d\mu)
 \\ &= \exp\left( - \int \limits_{0}^{t}\psi(V_sf)ds \right) \exp\left( - \int \limits_{0}^{\infty}\psi(V_sV_tf)ds \right)
 \\ &= \exp\left( - \int \limits_{0}^{t}\psi(V_sf)ds \right) \exp\left( - \int \limits_{t}^{\infty}\psi(V_{s}f)ds \right)
 \\ &= \exp\left( - \int \limits_{0}^{\infty}\psi(V_sf)ds \right)
 = \mathcal{L}_{\pi}(f).
\end{align*}
Hence by uniqueness of Laplace transforms
(see Proposition \ref{LEMMA:01}) we conclude
that $\pi$ is invariant. 

Let us prove that $\pi$ is the unique invariant measure.
Let $\pi'$ be another invariant measure.
For each $t \geq 0$ we obtain from \eqref{EQ:17}
\begin{align*}
 \mathcal{L}_{\pi'}(f)
 = \int \limits_{M_h(E)}\left( \int \limits_{M_h(E)}e^{-\langle f, \nu \rangle} P_t(\mu,d\nu)\right) \pi'(d\mu).
\end{align*}
Taking the limit $t \to \infty$ and using \eqref{EQ:12} gives
\[
 \mathcal{L}_{\pi'}(f) = \exp\left( - \int \limits_{0}^{\infty}\psi(V_sf)ds\right) = \mathcal{L}_{\pi}(f).
\]
Since $f$ was arbitrary, we conclude that $\pi = \pi'$.
\end{proof}

\section{Ergodicity and convergence rate}
In this section we study the convergence rate for $P_t(\mu,\cdot) \longrightarrow \pi$ in two 
different distances on $\mathcal{P}(M_h(E))$.
We define the Laplace distance on $\mathcal{P}(M_h(E))$ by
\begin{align}
 d_{\mathcal{L}}(\rho,\widetilde{\rho}) = \sup \limits_{ f \in B_h(E)^+ } \frac{| \mathcal{L}_{\rho}(f) - \mathcal{L}_{\widetilde{\rho}}(f)|}{\| f \|_{\infty}}, \qquad \rho,\widetilde{\rho} \in \mathcal{P}(M_h(E)).
\end{align}
It is not difficult to see that $(\mathcal{P}(M_h(E)), d_{\mathcal{L}})$ is a complete metric space. Moreover, convergence with respect to $d_{\mathcal{L}}$ implies weak convergence. For $\rho \in \mathcal{P}(M_h(E))$ we let
\begin{align}\label{EQ:10}
 P_t^*\rho(d\nu) = \int \limits_{M_h(E)}P_t(\mu,d\nu)\rho(d\mu), \qquad t \geq 0.
\end{align}
It describes the distribution of the $(\xi,\phi,\psi)$-superprocess at time $t \geq 0$ when its law at initial time $t = 0$ is given by $\rho$.
Clearly $\pi \in \mathcal{P}(M_h(E))$ is an invariant measure if and only if $P_t^* \pi = \pi$ holds for all $t \geq 0$.
The following is our first main result of this section.
\begin{Theorem}\label{TH:01}
 Suppose that the $(\xi,\phi,\psi)$-superprocess is subcritical 
 and $H_2$ satisfies \eqref{LOGMOMENT}.
 Let $\pi$ be the unique invariant measure given by Theorem \ref{TH:00}. Then there exists a constant $C > 0$ such that,
 for each $\rho \in \mathcal{P}(M_h(E))$, 
 \begin{align*}
  d_{\mathcal{L}}( P_t^*\rho, \pi) \leq C e^{-\delta t} \left( 1 + \int \limits_{M_h(E)} \log(1 + \langle h,\mu \rangle) \rho(d\mu) \right), \qquad t \geq 0,
 \end{align*}
 where $\delta$ is given by \eqref{EQ:07}.
\end{Theorem}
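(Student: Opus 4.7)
My plan is to adapt the Laplace-transform argument from the proof of Theorem \ref{TH:00} into a quantitative estimate, combining the exponential decay of the first moment \eqref{EXPONENTIAL:DECAY} with the logarithmic estimate \eqref{EQ:16}, exactly as in Lemma \ref{LEMMA:05}.

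First I split the invariant Laplace transform using $\int_0^\infty\psi(V_sf)ds = \int_0^t\psi(V_sf)ds + \int_t^\infty\psi(V_sf)ds$, write $\mathcal{L}_\pi(f) = \int\mathcal{L}_\pi(f)\rho(d\mu)$, and pull out the common factor $\exp(-\int_0^t\psi(V_sf)ds)\leq 1$ from both Laplace transforms to obtain
\begin{align*}
|\mathcal{L}_{P_t^*\rho}(f)-\mathcal{L}_\pi(f)| \leq \int_{M_h(E)}\bigl|e^{-\langle V_tf,\mu\rangle} - e^{-\int_t^\infty\psi(V_sf)ds}\bigr|\rho(d\mu).
\end{align*}
Applying the elementary bound $|e^{-a}-e^{-b}|\leq (1\wedge a)+(1\wedge b)$ for $a,b\ge 0$ then reduces matters to controlling
\begin{align*}
\int_{M_h(E)}(1\wedge \langle V_tf,\mu\rangle)\rho(d\mu) \quad\text{and}\quad \int_t^\infty\psi(V_sf)\,ds.
\end{align*}

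The immigration tail is handled directly by Lemma \ref{LEMMA:05}, which gives $\int_t^\infty\psi(V_sf)ds\leq C\delta^{-1}\|f\|_h e^{-\delta t}$; this accounts for the constant ``$1$'' in the final bound. For the $\mu$-dependent term I combine three standard estimates: $1\wedge a\leq C\log(1+a)$ valid for all $a\ge 0$, the stability bound \eqref{EXPONENTIAL:DECAY} in the form $\langle V_tf,\mu\rangle\leq C\|f\|_h e^{-\delta t}\langle h,\mu\rangle$, and the logarithmic inequality \eqref{EQ:16} applied with $x=C\|f\|_h e^{-\delta t}$ and $y=\langle h,\mu\rangle$. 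Chaining them yields
\begin{align*}
1\wedge\langle V_tf,\mu\rangle \leq C\|f\|_h e^{-\delta t}\bigl(1+\log(1+\langle h,\mu\rangle)\bigr).
\end{align*}
Integrating against $\rho(d\mu)$, dividing by $\|f\|_\infty$, and taking the supremum over $f\in B_h(E)^+$ produces the stated inequality.

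The main bookkeeping step is ensuring that \eqref{EQ:16} applies with the relevant values of its arguments, which requires $x=C\|f\|_h e^{-\delta t}$ to be sufficiently small: this is automatic for large $t$, and for bounded $t$ the claimed bound is trivially satisfied after enlarging $C$. The implicit comparison between $\|f\|_h$ and $\|f\|_\infty$ that is used throughout the paper gets absorbed into the generic constant in the same manner as in the proof of Lemma \ref{LEMMA:05}. I do not expect any new probabilistic input beyond \eqref{EXPONENTIAL:DECAY} and Lemma \ref{LEMMA:05}: the theorem should be precisely the quantitative refinement of the weak convergence already established in Theorem \ref{TH:00}.
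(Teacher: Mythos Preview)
Your proposal is correct and follows essentially the same route as the paper: the identical ingredients are the splitting of $\int_0^\infty\psi(V_sf)\,ds$ at $t$, the bound $|e^{-a}-e^{-b}|\le 1\wedge|a-b|$, the chain $1\wedge a\le C\log(1+a)$ together with \eqref{EQ:16} applied to $\langle V_tf,\mu\rangle$, and Lemma~\ref{LEMMA:05} for the immigration tail. The only organizational difference is that the paper first proves the pointwise estimate $d_{\mathcal L}(P_t(\mu,\cdot),\pi)\le Ce^{-\delta t}(1+\log(1+\langle h,\mu\rangle))$ and then invokes the coupling inequality of Lemma~\ref{LEMMA:04} to pass to general $\rho$, whereas you integrate against $\rho$ from the outset and thereby bypass that lemma; also note that \eqref{EQ:16} holds for all $x\ge 0$, so your caveat about small $x$ is unnecessary.
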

Based on the construction of $(\xi, \phi, \psi)$-superprocesses, it is natural to measure the rate of convergence in the distance $d_{\mathcal{L}}$. Surprisingly enough, such metric was to our knowledge not used in the literature so far.

Next we study the convergence rate with respect to the Wasserstein-1-distance introduced below.
Denote by $\| \cdot\|_{TV,h}$ the weighted total variation distance 
\[
 \|\mu - \widetilde{\mu}\|_{TV,h} = \int \limits_{E}h(x)|\mu - \widetilde{\mu}|(dx)
\]
where $|\mu - \widetilde{\mu}| = (\mu - \widetilde{\mu})_+ + (\mu - \widetilde{\mu})_-$ and $(\mu - \widetilde{\mu})_{\pm}$ denote the Hahn-Jordan decomposition of the signed measure $\mu - \widetilde{\mu}$. Using the homeomorphism $M_h(E) \ni \mu \longmapsto h(x)\mu(dx) \in M_{\1}(E)$ combined with 
\cite[Corollary 1.12]{L11} one can show that the Borel-$\sigma$-algebra generated by the weighted total variation distance 
coincides with \eqref{EQ:22}. Hence $\mathcal{P}(M_h(E))$ remains the same although the topologies generated by \eqref{EQ:21} and the weighted total variation distance do not coincide.

Given $\rho, \widetilde{\rho} \in \mathcal{P}(M_h(E))$, we call $H$ a coupling of $(\rho, \widetilde{\rho})$ if
it is a Borel probability measure on $M_h(E) \times M_h(E)$ whose marginals are given by $\rho$ and $\widetilde{\rho}$. Denote by $\mathcal{H}(\rho,\widetilde{\rho})$ the collection of all couplings of $(\rho, \widetilde{\rho})$, respectively.
The Wasserstein-1-distance on $\mathcal{P}_1(M_h(E))$ is defined by
\[
 W_1(\rho, \widetilde{\rho}) = \inf \left\{ \int \limits_{M_h(E) \times M_h(E)} \|\mu - \widetilde{\mu} \|_{TV,h} H(d\mu,d\widetilde{\mu}) \ : \ H \in \mathcal{H}(\rho, \widetilde{\rho}) \right\},
\]
where 
\[
 \mathcal{P}_1(M_h(E)) = \left\{ \rho \in \mathcal{P}(M_h(E)) \ : \ \int \limits_{M_h(E)} \langle h, \mu \rangle \rho(d\mu) < \infty \right\}.
\]
By classical results on Wasserstein distances, see \cite{V09}, 
one finds that $(\mathcal{P}_1(M_h(E)), W_1)$ is a complete metric space. Convergence with respect to $W_1$ is strictly stronger then weak convergence on $\mathcal{P}_1(M_h(E))$.
The following is our second main result of this section.
\begin{Theorem}\label{TH:02}
 Suppose that the $(\xi,\phi,\psi)$-superprocess is subcritical and $H_2$ satisfies \eqref{EQ:06}.
 Then the unique invariant measure $\pi$ belongs to $\mathcal{P}_1(M_h(E))$ and satisfies
 \begin{align}\label{EQ:01}
  \int \limits_{M_h(E)} \mu(dx) \pi(d\mu) 
  = \int \limits_{0}^{\infty} R_s^* a(dx) ds < \infty.
 \end{align}
 Moreover, there exists a constant $C > 0$ such that,
 for each $\rho \in \mathcal{P}_1(M_h(E))$,
 \begin{align}\label{EQ:08}
  W_1(P_t^*\rho, \pi) \leq \left( \int \limits_{M_h(E)} \langle h,\mu \rangle \rho(d\mu) + \int \limits_{0}^{\infty}\langle R_sh, a \rangle ds \right)C e^{- \delta t}, \qquad t \geq 0,
 \end{align}
 where $\delta$ is given by \eqref{EQ:07}.
\end{Theorem}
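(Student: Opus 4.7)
The plan has two main pieces: first, establishing $\pi \in \mathcal{P}_1(M_h(E))$ together with the explicit formula \eqref{EQ:01} for its first moment, and second, proving an exponential contraction of $P_t^*$ in $W_1$ via a coupling argument. For the first piece, I would start from the Laplace representation $\mathcal{L}_\pi(f) = \exp(-\int_0^\infty \psi(V_s f)\,ds)$ given by Theorem \ref{TH:00}, evaluate it at $\varepsilon f$ for $f \in B_h(E)^+$, and pass to the limit $\varepsilon \to 0^+$ in $\varepsilon^{-1}(1 - \mathcal{L}_\pi(\varepsilon f))$. Monotone convergence identifies this limit with $\int \langle f, \nu\rangle \pi(d\nu)$ on one side, while Lemma \ref{LEMMA:06} (which gives $V_s(\varepsilon f)/\varepsilon \nearrow R_s f$) combined with \eqref{EXPONENTIAL:DECAY} and the hypothesis \eqref{EQ:06} justifies two applications of dominated convergence---inside the $H_2$-integral defining $\psi$ and then in $s$---to yield $\int_0^\infty \varepsilon^{-1}\psi(V_s(\varepsilon f))\,ds \to \int_0^\infty \langle R_s f, a\rangle\,ds$ on the other. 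Taking $f = h$ and bounding $\langle R_s h, a\rangle \leq C e^{-\delta s} \langle h, a\rangle$ via subcriticality shows both sides are finite, yielding $\pi \in \mathcal{P}_1(M_h(E))$ together with \eqref{EQ:01}.

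For the contraction, I would exploit the branching and immigration structure encoded in \eqref{BRANCHING:LAPLACE}, which directly yields the distributional identity
\[
 P_t(\mu, \cdot) = Q_t(\mu \wedge \widetilde{\mu}, \cdot) * Q_t((\mu - \widetilde{\mu})_+, \cdot) * P_t(0, \cdot),
\]
where $Q_t$ is the Markov kernel of the associated $(\xi, \phi, 0)$-superprocess, together with the analogous decomposition for $\widetilde{\mu}$. Realising independent random measures $X_1 \sim Q_t(\mu \wedge \widetilde{\mu}, \cdot)$, $X_2 \sim Q_t((\mu - \widetilde{\mu})_+, \cdot)$, $X_3 \sim Q_t((\widetilde{\mu} - \mu)_+, \cdot)$, and $Y \sim P_t(0, \cdot)$ on a common probability space, the pair $(N, \widetilde{N}) := (X_1 + X_2 + Y,\ X_1 + X_3 + Y)$ is a coupling of $(P_t(\mu, \cdot), P_t(\widetilde{\mu}, \cdot))$ with $|N - \widetilde{N}| \leq X_2 + X_3$ as (nonnegative) measures. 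Taking expectation of $\|N - \widetilde{N}\|_{TV, h}$, using Proposition \ref{LEMMA:00} with $\psi \equiv 0$ for the first moments of $X_2$ and $X_3$, and applying \eqref{EQ:07} gives
\[
 W_1(P_t(\mu, \cdot), P_t(\widetilde{\mu}, \cdot)) \leq \langle R_t h, (\mu-\widetilde{\mu})_+\rangle + \langle R_t h, (\widetilde{\mu}-\mu)_+\rangle \leq C e^{-\delta t} \|\mu - \widetilde{\mu}\|_{TV, h}.
\]

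To conclude, integrating this pointwise bound against any coupling $H \in \mathcal{H}(\rho, \pi)$ and using that $\pi$ is a fixed point (Theorem \ref{TH:00}) yields $W_1(P_t^* \rho, \pi) \leq C e^{-\delta t} W_1(\rho, \pi)$; bounding $W_1(\rho, \pi)$ via the product coupling $\rho \otimes \pi$ with the trivial inequality $\|\mu - \widetilde{\mu}\|_{TV, h} \leq \langle h, \mu\rangle + \langle h, \widetilde{\mu}\rangle$ and substituting \eqref{EQ:01} produces the claimed bound \eqref{EQ:08}. I expect the main obstacle to be in the first step: weak convergence $P_t(0, \cdot) \to \pi$ does not on its own deliver convergence of first moments, so one cannot simply pass to the limit in Proposition \ref{LEMMA:00}; the argument must proceed at the level of Laplace transforms, with careful use of dominated convergence driven by the $s$-uniform bound provided by \eqref{EXPONENTIAL:DECAY}. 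Once \eqref{EQ:01} is in hand, the branching-property coupling and the contraction step are essentially mechanical.
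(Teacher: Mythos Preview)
Your proposal is correct and in fact sharper than the paper's argument in two respects. For \eqref{EQ:01}, the paper simply invokes Fatou's lemma on $P_t(\mu,\cdot)\to\pi$ together with Proposition~\ref{LEMMA:00}, which only yields the inequality $\int\langle f,\nu\rangle\,\pi(d\nu)\le\int_0^\infty\langle R_sf,a\rangle\,ds$; your Laplace-transform differentiation argument (monotone convergence on the $\pi$-side, Lemma~\ref{LEMMA:06} plus dominated convergence on the $\psi$-side) delivers equality directly. For the contraction, the paper removes only the common immigration factor, writing $P_t(\mu,\cdot)=Q_t(\mu,\cdot)\ast P_t(0,\cdot)$, applies Lemma~\ref{LEMMA:02}, and then uses the crude product coupling $Q_t(\mu,\cdot)\otimes Q_t(\widetilde\mu,\cdot)$ together with $\|\nu-\widetilde\nu\|_{TV,h}\le\langle h,\nu\rangle+\langle h,\widetilde\nu\rangle$, obtaining $W_1(P_t(\mu,\cdot),P_t(\widetilde\mu,\cdot))\le Ce^{-\delta t}(\langle h,\mu\rangle+\langle h,\widetilde\mu\rangle)$. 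Your refined coupling via the additional branching split $Q_t(\mu,\cdot)=Q_t(\mu\wedge\widetilde\mu,\cdot)\ast Q_t((\mu-\widetilde\mu)_+,\cdot)$ gives the genuinely stronger intermediate bound $W_1(P_t(\mu,\cdot),P_t(\widetilde\mu,\cdot))\le Ce^{-\delta t}\|\mu-\widetilde\mu\|_{TV,h}$, and hence the true contraction $W_1(P_t^*\rho,\pi)\le Ce^{-\delta t}W_1(\rho,\pi)$, before you relax it to \eqref{EQ:08} via the product coupling of $(\rho,\pi)$. Both routes reach \eqref{EQ:08}; yours carries more information along the way at essentially no extra cost.
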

In the particular case $E = \{1,\dots, d\}$ based on the use of Stochastic Differential Equations some similar (and essentially stronger) estimates have been recently obtained in \cite{FJR18}.
Unfortunately the techniques developed there do not directly apply in this case. Our proof is strongly based on the representation of the Laplace transforms \eqref{BRANCHING:LAPLACE} combined with some properties of the Wasserstein-1-distance.

\subsection{Proof of Theorem \ref{TH:01}}
Let $C > 0$ be a generic constant which may vary from line to line. Take $f \in B_h(E)^+$, $t \geq 0$ and $\mu \in \mathcal{P}(M_h(E))$. Then
\begin{align*}
 |L_{P_t(\mu, \cdot)}(f) - L_{\pi}(f)|
 &= \left| \exp\left( - \langle V_tf, \mu \rangle - \int \limits_0^t \psi(V_sf)ds \right) - \exp\left( - \int \limits_0^{\infty}\psi(V_sf)ds \right) \right|
 \\ &\leq \exp\left( - \int \limits_0^t \psi(V_sf)ds \right)\left| e^{- \langle V_tf, \mu \rangle} - 1\right| 
 \\ &\ \ \ + \left| \exp\left( - \int \limits_0^t \psi(V_sf)ds\right) - \exp\left( - \int \limits_{0}^{\infty}\psi(V_sf)ds \right) \right|
 \\ &= I_1 + I_2.
\end{align*}
Using $|e^{-a} - e^{-b}| \leq 1 \wedge |a-b|$, $a,b \geq 0$, 
and then $1 \wedge a \leq C \log(1 + a)$, gives
\[
 I_1 \leq 1 \wedge \langle V_tf, \mu \rangle 
 \leq C \log\left( 1 + \langle V_tf, \mu \rangle\right)
 \leq C \| f \|_h e^{- \delta t} \left( 1 + \log( 1 + \langle h, \mu \rangle) \right),
\]
where we have used \eqref{EQ:16}.
Similarly we obtain for the second term 
\begin{align*}
 I_2 \leq \int \limits_{t}^{\infty}\psi(V_sf)ds 
  \leq C \| f \|_h \int \limits_{t}^{\infty}e^{- \delta s}ds 
  \leq C \delta^{-1} \| f \|_h e^{- \delta t}.
\end{align*}
Combining both estimates gives
\[
 d_L(P_t(\mu, \cdot), \pi) \leq C e^{- \delta t}\left( 1 + \log( 1 + \langle h, \mu \rangle) \right).
\]
Let $\rho \in \mathcal{P}(M_h(E))$ and let $H$ be any coupling of $(\rho,\pi)$. Using Lemma \ref{LEMMA:04} gives
\begin{align*}
 d_L(P_t^*\rho, \pi) &= d_L(P_t^*\rho, P_t^*\pi) 
 \\ &\leq \int \limits_{M_h(E) \times M_h(E)} d_L(P_t(\mu,\cdot), P_t(\widetilde{\mu},\cdot)) H(d\mu,d\widetilde{\mu})
 \\ &\leq \int \limits_{M_h(E) \times M_h(E)}C e^{- \delta t}\left( 1 + \log( 1 + \langle h, \mu \rangle) \right) H(d\mu,d\widetilde{\mu})
 \\ &= C e^{-\delta t} \left( 1 + \int \limits_{M_h(E)} \log(1 + \langle h,\mu \rangle) \rho(d\mu) \right).
\end{align*}
This completes the proof of Theorem \ref{TH:01}.

\subsection{Proof of Theorem \ref{TH:02}}
Using first $P_t(\mu,\cdot) \longrightarrow \pi$
weakly as $t \to \infty$ combined with the Lemma of Fatou gives
\begin{align*}
 \int \limits_{M_h(E)}\langle f,\nu\rangle \pi(d\nu)
 &\leq \liminf \limits_{t\to \infty}\int \limits_{M_h(E)} \langle f,\nu\rangle P_t(\mu,d\nu)
 \\ &= \liminf\limits_{t\to \infty} \left( \langle R_t f, \mu \rangle + \int \limits_{0}^{t} \langle R_sf, a \rangle \right)
 = \int \limits_{0}^{\infty} \langle R_sf, a \rangle ds < \infty,
\end{align*}
where we have used Lemma \ref{LEMMA:00} combined with \eqref{EXPONENTIAL:DECAY}. This proves \eqref{EQ:01}.
Denote by $Q(\mu,d\nu)$ the transition probabilities of the 
$(\xi,\phi,\psi = 0)$-superprocess. 
From \eqref{BRANCHING:LAPLACE} we obtain $P_t(\mu, \cdot) = Q(\mu,\cdot) \ast P(0, \cdot)$. Using first the convexity of the Wasserstein distance 
(see \cite[Theorem 4.8]{V09}) and then Lemma \ref{LEMMA:02} yields
\begin{align*}
 W_1(P_t^*\rho, \pi) &= W_1(P_t^*\rho, P_t^*\pi)
 \\ &\leq \int \limits_{M_h(E)\times M_h(E)}W_1(P_t(\mu,\cdot),P_t(\widetilde{\mu},\cdot)) H(d\mu,d\widetilde{\mu})
 \\ &\leq \int \limits_{M_h(E)\times M_h(E)}W_1(Q_t(\mu,\cdot),Q_t(\widetilde{\mu},\cdot)) H(d\mu,d\widetilde{\mu}).
\end{align*}
Estimating the Wasserstein distance from above by the particular choice of coupling $Q_t(\mu, d\nu)Q_t(\widetilde{\mu},d\widetilde{\nu})$ and then using 
$\| \nu - \widetilde{\nu} \|_{TV,h} \leq \| \nu \|_{TV,h} + \| \widetilde{\nu} \|_{TV,h} = \langle h, \nu\rangle + \langle h, \widetilde{\nu} \rangle$ yields
\begin{align*}
 W_1(Q_t(\mu,\cdot),Q_t(\widetilde{\mu},\cdot))
 &\leq \int \limits_{M_h(E)\times M_h(E)} \| \nu - \widetilde{\nu}\|_{TV,h} Q_t(\mu,d\nu)Q_t(\widetilde{\mu}, d\widetilde{\nu})
 \\ &\leq \int \limits_{M_h(E)} \langle h, \nu \rangle Q_t(\mu,d\nu) + \int \limits_{M_h(E)} \langle h, \nu \rangle Q_t(\widetilde{\mu}, d\nu)
 \\ &= \langle R_th, \mu \rangle + \langle R_th,\widetilde{\mu}\rangle
 \\ &\leq C e^{- \delta t}\left( \langle h, \mu \rangle + \langle h, \widetilde{\mu}\rangle \right),
\end{align*}
where the last equality follows from Lemma \ref{LEMMA:00} with $f = h$.
Hence we obtain
\begin{align*}
 W_1(P_t^*\rho,\pi) &\leq C e^{-\delta t} \int \limits_{M_h(E)\times M_h(E)}\left( \langle h, \mu \rangle + \langle h, \widetilde{\mu}\rangle \right)H(d\mu,d\widetilde{\mu})
 \\ &= \left( \int \limits_{M_h(E)} \langle h, \mu \rangle \rho(d\mu) + \int \limits_{M_h(E)} \langle h, \mu \rangle \pi(d\mu)\right)C e^{-\delta t}.
\end{align*}
In view of \eqref{EQ:01}, the assertion is proved.

\section{Examples}

\subsection{Super-L\'evy processes}
Let $E = \R^d$ and denote by $C_0(\R^d)$ the Banach space of continuous functions vanishing at infinity. Let $\xi$ be a L\'evy process with generator $(A,D(A))$ on $C_0(\R^d)$ and transition semigroup
\[
 p_t^{\xi}f(x) = \E_x[ f(\xi_t) ], \qquad f \in C_0(\R^d).
\]
Let $\phi$ be the branching mechanism given by 
\[
 \phi(x,f) = b f(x) + c f(x)^2 + \int \limits_0^{\infty}\left( e^{-zf(x)} - 1 + f(x)z \right)m(x,dz),
\]
where $b \in \R$, $c \geq 0$ and $\sup_{x \in \R^d}\int_{0}^{\infty} z \wedge z^2 n(x,dz) < \infty$.
For vanishing immigration (i.e. $\psi \equiv 0$) 
and in dimension $d = 1$ the corresponding branching process was constructed in \cite{HLY14}. Below we introduce immigration of mass into the system. 
Let $\rho$ be a Borel probability measure on $\R^d$ and consider the immigration mechanism
\[
 \psi(f) = \int \limits_{0}^{\infty}\left( 1 - e^{- z \langle f, \rho\rangle} \right)G(dz),
\]
where $G$ is a Borel measure on $(0,\infty)$ such that $\int_0^{\infty}zG(dz) < \infty$.

This model describes a system of particles as follows. New mass immigrates according to a Poisson random measure with distribution $\rho$ and intensity described by the measure $G$. After mass is placed inside $\mathrm{supp}(\rho) \subset \R^d$, it evolves according to the prescribed L\'evy process $\xi$ performing randomly some branching described by the mechanism $\phi$. Below we briefly explain how the general results of this work can be applied to this case.

Suppose that there exists a strictly positive, continuous function $h \in C_0(\R^d)$ which belongs to the domain of the extended generator and satisfies $Ah \leq K h$ for some constant $K > 0$. Then conditions (A1) -- (A3) are satisfied. 
Since $\phi$ is local, we have $\gamma(x,dy) = 0$ and hence $r_t(x,dy) = e^{-bt}p_t^{\xi}(x,dy)$
which yields
\[
 \int \limits_{E}h(y)r_t(x,dy) = e^{-bt} \int \limits_{E}h(y)p_t^{\xi}(x,dy) \leq e^{(K - b)t}h(x).
\]
This shows that the super-L\'evy process is subcritical
provided that $b > K$.
In particular, under the suitable integrability condition
\[
 \int \limits_{1}^{\infty}\log(z)G(dz) < \infty,
\]
such process has a unique invariant measure, see Section 3.

\subsection{Infinite-type continuous-state branching processes with immigration}
Let $E$ be a countable (finite or infinite) discrete set equipped with the discrete topology. The reader may think about the particular cases $E = \Z^d$ and $\E = \N$.
Let $h = (h(x))_{x \in E}$ 
be a strictly positive sequence of elements indexed by $E$. 
Define the weighted space of summable sequences
\[
 \ell_h^1 = \left\{ \mu = (\mu(x))_{x \in E} \ | \ \sum \limits_{x \in E}\mu(x) h(x) < \infty \right\}
\]
and the weighted space of bounded sequences
\[
 \ell_h^{\infty} = \left\{ f = (f(x))_{x \in E} \ | \ \sup \limits_{x \in E}\frac{|f(x)|}{h(x)} < \infty \right\}.
\]
The corresponding pairing is given by
\[
 \langle f, \mu \rangle = \sum \limits_{x \in E}f(x) \mu(x), 
 \qquad f \in \ell_h^{\infty}, \ \ \mu \in \ell_h^{1}.
\]
Below we consider a particle system with sites indexed by $E$ and noncompact spins $\R_+$, i.e. the state space is $\R_+^E$. 
Such particle system is supposed to be described by the following objects:
\begin{enumerate}[leftmargin = *]
 \item[(i)] Let $\xi$ be a conservative Markov chain on $E$ whose generator is given by 
 \[
  (Af)(x) = \sum \limits_{y \in E}(f(y) - f(x))q(x,y), \qquad f \in B_h(E),
 \]
 where $q(x,y) \geq 0$ is such that 
 \begin{align}\label{EQ:28}
  \sup \limits_{x \in E}\sum \limits_{y \in E}q(x,y) < \infty.
 \end{align}
 Moreover there exists a constant $C > 0$ satisfying
 \begin{align}\label{EQ:29}
  \sum \limits_{y \in E}h(y) q(x,y) \leq C h(x) + h(x)\sum \limits_{y \in E}q(x,y), \qquad x \in E.
 \end{align}
 \item[(ii)] $(b(x))_{x \in E}$ a bounded sequence.
 \item[(iii)] $(c(x))_{x \in E}$ is a nonnegative sequence such that $(h(x)c(x))_{x \in E}$ is bounded.
 \item[(iv)] $(\eta(x,y))_{x,y \in E}$
 is an infinite matrix with nonnegative entries such that
 \[
  \sum \limits_{y \in E}h(y) \eta(x,y) \leq C h(x), \qquad x \in E
 \]
 for some constant $C > 0$.
 \item[(v)] $H_1(x, d\eta)$ is a collection of $\sigma$-finite measures on $\ell_h^{1} \backslash \{0\}$ indexed by $E$ and satisfying
 \[
 \int \limits_{\ell_h^1 \backslash \{0\}}\left( \langle h,\nu \rangle \wedge \langle h, \nu \rangle^2 + \langle h, \nu_x \rangle \right) H_1(x,d\nu) \leq C h(x), \qquad x \in E,
 \]
 for some constant $C > 0$.
 \item[(vi)] $(\beta(x))_{x \in E} \in \ell_h^1$ is a nonnegative sequence.
 \item[(vii)] $H_2$ is a Borel measure on $\ell_h^1 \backslash \{0\}$ satisfying
 \begin{align*}
  \int \limits_{\ell_h^1 \backslash \{0\}} 1 \wedge \langle h, \nu \rangle H_2(d\nu) < \infty.
 \end{align*}
\end{enumerate}
For $x \in E$ and $(f(x))_{x \in E} \in \ell_h^{\infty}$ nonegative, we introduce the branching mechanism
 \begin{align*}
 \phi(x,f) &= c(x)f(x)^2 + b(x)f(x) - \sum \limits_{y \in E}f(y) \eta(x,y)
 \\ &\ \ \ + \int \limits_{\ell_h^1 \backslash \{0\}}\left( e^{- \langle f,\nu\rangle} - 1 + f(x) \nu(x) \right) H_1(x,d\nu)
\end{align*}
and immigration mechanism
\begin{align}\label{EQ:26}
 \psi(f) = \sum \limits_{x \in E}f(x)\beta(x) + \int \limits_{\ell_h^1 \backslash \{0\}}\left( 1 - e^{- \langle f, \nu \rangle}\right)H_2(d\nu).
\end{align}
\begin{Theorem}
 Suppose that conditions (i) -- (vii) are satisfied. Then
 \begin{enumerate}[leftmargin = *]
  \item[(a)] For each nonnegative sequence $f \in \ell_h^{\infty}$ there exists a unique locally bounded solution $(v_t(x))_{x \in E}$ in $\ell_h^{\infty}$ to 
  \begin{align}\label{EQ:24}
   \frac{d v_t(x)}{d t} = Av_t(x) - \phi(x,v_t), \qquad v_0(x) = f(x), \ \ x \in E.
  \end{align}
  \item[(b)] There exists a unique Markov kernel $P_t(\eta,d\mu)$ whose Laplace transform is given by
  \[
   \int\limits_{\ell_h^1}e^{- \langle f, \mu \rangle} P_t(\eta, d\mu) = \exp \left( - \langle V_tf, \eta \rangle - \int \limits_0^t \psi(V_sf) ds \right).
  \]
  where $f \in \ell_h^{\infty}$ is nonnegative and $V_tf(x) = v_t(x)$.
  \item[(c)] Assume that there exists a constant $\delta > 0$ satisfying
  \begin{align}\label{EQ:25}
   \sum \limits_{y \in E}h(y) \left( q(x,y) + \gamma(x,y)\right) \leq \left( b(x) + \sum \limits_{y \in E}q(x,y) \right)h(x) - \delta h(x), 
  \end{align}
  for each $x \in E$ where 
  \[
   \gamma(x,y) = \eta(x,y) + \int \limits_{\ell_h^1 \backslash \{0\}} \1_{ \{ x \neq y\} } \nu(y) H_1(x,d\nu).
  \]
  Then the corresponding superprocess is subcritical.
 \end{enumerate}
\end{Theorem}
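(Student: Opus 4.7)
The overall strategy is to verify that the discrete setting described in (i)--(vii) is a special case of the general framework of Sections 2--3, so that the construction theorem and the Lyapunov-type proposition apply directly. That is, I would endow $E$ with the discrete topology (making it Lusin), identify $M_h(E)$ with $\ell_h^1$ and $B_h(E) = C_h(E)$ with $\ell_h^\infty$, and check conditions (A1)--(A3). The bulk of the work is bookkeeping; the only substantive verification is the Lyapunov inequality needed for subcriticality.

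For (A1), the bounded rate assumption \eqref{EQ:28} guarantees that the Q-matrix $(q(x,y))$ generates a unique conservative Feller (hence Borel right) chain $\xi$ on $E$, with extended generator $A$ defined on $\ell_h^\infty$. Applying \eqref{EQ:29} pointwise gives
\begin{align*}
  Ah(x) = \sum_{y \in E} h(y) q(x,y) - h(x) \sum_{y \in E} q(x,y) \leq C h(x),
\end{align*}
so the Remark after (A1) yields \eqref{EQ:05} with $\alpha = C$. Conditions (A2) and (A3) are immediate: (ii)--(v) translate directly into the boundedness of $b$, $ch$, and the required moment bounds on $\eta$ and $H_1$, while (vi)--(vii) give (A3). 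Invoking the construction theorem then produces the cumulant semigroup $V_t$ together with the Markov kernel $P_t(\eta,d\mu)$ of part (b). To upgrade the mild equation \eqref{EQ:00} to the strong ODE formulation \eqref{EQ:24} in part (a), I would observe that \eqref{EQ:28} and \eqref{EQ:29} together imply that $A$ is a \emph{bounded} operator on $\ell_h^\infty$, so $p_t^\xi = e^{tA}$ is norm-continuous; since $\phi(\cdot, v_t)$ is locally Lipschitz in $v_t \in \ell_h^\infty$ (conditions (iii)--(v) bound every term), the mild solution is continuously differentiable and satisfies \eqref{EQ:24} classically.

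For part (c), I would invoke the Lyapunov-type proposition at the end of Section 2. A direct computation of the left-hand side of \eqref{EQ:20} gives
\begin{align*}
  Ah(x) + \Gamma h(x) &= \sum_{y \in E} h(y) q(x,y) - h(x)\sum_{y \in E} q(x,y) + \sum_{y \in E} h(y) \gamma(x,y),
\end{align*}
and rearranging \eqref{EQ:25} shows that this is bounded above by $b(x)h(x) - \delta h(x)$, i.e., \eqref{EQ:20} holds. The integral identity $R_t h = h + \int_0^t R_s(Ah - bh + \Gamma h)\,ds$ required as the hypothesis of that proposition follows from the variation-of-constants formula for the bounded perturbation $\Gamma - b$ of the bounded generator $A$ on $\ell_h^\infty$. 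The proposition then yields $\int_E h(y) r_t(x,dy) = R_t h(x) \leq e^{-\delta t} h(x)$, which is exactly the subcriticality estimate \eqref{EQ:07} with $C = 1$.

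The only mildly nontrivial step is the verification of \eqref{EQ:20} from \eqref{EQ:25}; once this algebraic identification is in place, the rest is a straightforward specialization of the general theory. No new estimates are required beyond what is provided by assumptions (i)--(vii) and by the earlier results already established in the paper.
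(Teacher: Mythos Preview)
Your proposal is correct. For parts (a) and (b) you follow exactly the paper's line: identify $M_h(E)\cong\ell_h^1$, $B_h(E)\cong\ell_h^\infty$, verify (A1)--(A3) from (i)--(vii), and invoke the construction theorem; the boundedness of $A$ on $\ell_h^\infty$ (from \eqref{EQ:28}--\eqref{EQ:29}) is what upgrades the mild equation to the ODE \eqref{EQ:24}.

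For part (c) your route differs slightly from the paper's. You apply the Lyapunov-type proposition directly: you check \eqref{EQ:20} by computing $Ah+\Gamma h$ and rearranging \eqref{EQ:25}, and you justify the integral identity for $R_th$ via the boundedness of $A$, $b$, $\Gamma$ on $\ell_h^\infty$. The paper instead first absorbs the Markov-chain generator $A$ into a new branching mechanism $\widetilde\phi$ with parameters $\widetilde b(x)=b(x)+\sum_y q(x,y)$ and $\widetilde\gamma(x,y)=q(x,y)+\gamma(x,y)$, so that the spatial motion becomes trivial ($\xi_t\equiv\xi_0$); then $p_t^\xi f=f$ and \eqref{EQ:34} gives $R_t=e^{tB}$ explicitly with $Bf=\widetilde\Gamma f-\widetilde b f=Af+\Gamma f-bf$, and \eqref{EQ:25} reads $Bh\le-\delta h$, whence $R_th\le e^{-\delta t}h$. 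The two arguments are equivalent---both reduce to the same bounded operator $B$ and the same inequality $Bh\le-\delta h$---but the paper's reformulation makes the exponential-semigroup structure of $R_t$ explicit without appealing to the proposition's hypothesis on the integral identity, while your approach is the more direct application of the machinery already set up in Section~2.
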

\begin{proof}
 Note that $M_h(E)$ may be identified by $\mu \longmapsto (\mu(\{x\}))_{x \in E} \equiv (\mu(x))_{x \in E}$ with $\ell_h^1$.
 Similarly we can identify $B_h(E)$ by $f \longmapsto (f(x))_{x \in E}$ with $\ell_h^{\infty}$. 
 Condition \eqref{EQ:28} implies that $A$ is a bounded operator on $B_{\1}(E)$ and by \eqref{EQ:29} it is also bounded on $B_h(E)$. Hence condition (A1) is satisfied. Conditions (A2) and (A3) are immediate consequences of (ii) -- (vii) so that
 assertions (a) and (b) follow from Theorem \ref{TH:00}. Let us prove assertion (c). Observe that the right-hand side in \eqref{EQ:24} satisfies
 \begin{align*}
  Av_t(x) - \phi(x,v_t) = \widetilde{\phi}(x,v_t),
 \end{align*}
 where $\widetilde{\phi}(x,f)$ is a new branching mechanism 
 given by 
 \begin{align*}
  \widetilde{\phi}(x,f) &= \widetilde{b}(x)f(x) + c(x)f(x)^2 
  - \sum \limits_{y \in E}f(y) \widetilde{\gamma}(x,y)
  \\ &\ \ \ + \int \limits_{\ell_h^1 \backslash \{0\}}\left( e^{- \langle f,\nu \rangle} - 1 + \langle f, \nu \rangle \right) H_2(x,d\nu)
 \end{align*}
  with 
 \begin{align*}
  \widetilde{b}(x) = b(x) + \sum \limits_{y \in E}q(x,y), \qquad 
  \widetilde{\gamma}(x,y) = q(x,y) + \gamma(x,y).
 \end{align*}
 Hence $(v_t)_{t \geq 0}$ given by \eqref{EQ:24} is also the unique solution to
 \[
  \frac{dv_t(x)}{dt} = - \widetilde{\phi}(x,v_t), \qquad v_0(x) = f(x).
 \]
 Using this new representation (which has no spatial motion) we first observe that it still satisfies conditions (A1) -- (A3) for the particular choice $\xi_t = \xi_0$, $t \geq 0$.
 We proceed to compute the semigroup $(R_t)_{t \geq 0}$.
 Since $\xi_t = \xi_0$, we have 
 $p_t^{\xi}f(x) = f(x)$ and hence in view of \eqref{EQ:34}
 the semigroup $(R_t)_{t \geq 0}$ is the unique solution to 
 \[
  R_th(x) = h(x) + \int \limits_{0}^{t}R_sBh(x) ds,
 \]
 i.e. $R_t = e^{tB}$,
 where $B$ is a bounded linear operator on $B_h(E)$ given by
 \[
  Bf(x) = \sum \limits_{y \in E}f(y) \widetilde{\gamma}(x,y) - \widetilde{b}(x)f(x).
 \]
 By \eqref{EQ:25} one has $Bh(x) \leq - \delta h(x)$ which shows that $R_th(x) \leq e^{- \delta t}h(x)$.
 Since 
 \[
  Bf(x) = Af(x) + \sum \limits_{y \in E}f(y) \gamma(x,y) - b(x)f(x),
 \]
 we conclude the assertion.
\end{proof}
Recently in \cite{KP18} the authors have studied local and global extincition for the particular choice 
$E = \N$, $h(x) \equiv 1$ with branching mechanism 
\begin{align}\label{EQ:27}
 \phi(x,f) &= b(x)f(x) + c(x)f(x)^2 + \int \limits_{0}^{\infty}\left( e^{- zf(x)} - 1 + zf(x) \right)G_0(x,dz)
 \\ \notag &\ \ \ - g(x)\left( d(x)\langle f, \pi_x\rangle + \int \limits_{0}^{\infty}\left( 1 - e^{- z \langle f, \pi_x \rangle} \right) G_1(x,dz) \right),
\end{align}
immigration mechanism $\psi \equiv 0$
and trivial spatial motion, i.e. $q(x,y) \equiv 0$.
Here $c(x),g(x),d(x) \geq 0$ and $b(x) \in \R$ are bounded sequences and $(\pi_x)_{x \in \N}$ is a family of probability distributions on $\N$ with $\pi_x(\{x\}) = 0$ for all $x \in \N$, and $z \wedge z^2 G_0(x,dz)$ and $z G_1(x,dz)$ are bounded kernels from $\N$ to $(0,\infty)$.
Note that the first part in $\phi$ corresponds to local branching while the second part is nonlocal branching which couples different lattice points in a nontrivial way.
Below we introduce immigration to this model and show that it satisfies conditions (i) -- (vii).
\begin{Corollary}
 Let $E = \N$, $h(x) \equiv 1$, $\phi$ be given by \eqref{EQ:27} and $\psi$ by \eqref{EQ:26}. 
 Then conditions (i) -- (vii) are satisfied.
 If there exists $\delta > 0$ such that
 \[
  g(x)\left( d(x) + \int \limits_{0}^{\infty}z G_1(x,dz)\right) \leq b(x) - \delta, \qquad x \in E,
 \]
 then \eqref{EQ:25} is satisfied and the corresponding superprocess is subcritical.
\end{Corollary}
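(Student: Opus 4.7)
The plan is in two parts: verify the structural assumptions (i)--(vii) by matching \eqref{EQ:27} against the general form of the branching mechanism, and then compute the kernel $\gamma(x,y)$ explicitly to reduce \eqref{EQ:25} to the stated scalar inequality.

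First I would read off the data. Since $q(x,y) \equiv 0$, condition (i) is vacuous; (ii), (iii), (vi) are immediate from the boundedness of $b,c,g,d$ and from \eqref{EQ:26}. For the linear nonlocal term $-g(x)d(x)\langle f,\pi_x\rangle$ the natural identification is $\eta(x,y) = g(x)d(x)\pi_x(\{y\})$, for which $\sum_{y}h(y)\eta(x,y) = g(x)d(x)$ is bounded, giving (iv). The integral part of \eqref{EQ:27} splits as
\[
 H_1(x,d\nu) = \int_0^{\infty}\delta_{z\delta_x}(d\nu)\,G_0(x,dz) + g(x)\int_0^{\infty}\delta_{z\pi_x}(d\nu)\,G_1(x,dz),
\]
obtained by noting that for $\nu=z\delta_x$ one has $\langle f,\nu\rangle = zf(x)$ and $\nu(\{x\}) = z$, so $e^{-\langle f,\nu\rangle} - 1 + f(x)\nu(\{x\}) = e^{-zf(x)} - 1 + zf(x)$, while for $\nu = z\pi_x$ one has $\nu(\{x\}) = z\pi_x(\{x\}) = 0$, so the corresponding integrand reduces to $e^{-z\langle f,\pi_x\rangle}-1$, which matches the second line of \eqref{EQ:27} up to sign. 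Condition (v) with $h\equiv 1$ then asks for the finiteness of $\int(\|\nu\|\wedge\|\nu\|^2 + \nu(E\setminus\{x\}))H_1(x,d\nu)$: the $G_0$-part contributes $\int_0^{\infty}(z\wedge z^2)\,G_0(x,dz)$ (the ``off-diagonal'' term vanishes since $\nu=z\delta_x$), while the $G_1$-part contributes $g(x)\int_0^{\infty}(z\wedge z^2 + z)G_1(x,dz) \leq 2\|g\|_{\infty}\int z\,G_1(x,dz)$, both bounded by hypothesis. Finally (vii) is just the hypothesis on $H_2$ used in \eqref{EQ:26}.

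For the subcriticality statement I would plug the explicit $H_1$ into
\[
 \gamma(x,y) = \eta(x,y) + \int_{\ell_h^1\setminus\{0\}}\1_{\{x\neq y\}}\nu(y)\,H_1(x,d\nu).
\]
The $\delta_{z\delta_x}$-part contributes nothing because $\nu(y) = z\delta_x(y) = 0$ for $y\neq x$, while the $\delta_{z\pi_x}$-part contributes $g(x)\pi_x(\{y\})\int_0^{\infty}z\,G_1(x,dz)$ (using $\pi_x(\{x\})=0$). Hence
\[
 \gamma(x,y) = g(x)\pi_x(\{y\})\Bigl(d(x) + \int_0^{\infty}z\,G_1(x,dz)\Bigr).
\]
Summing in $y$ and using $q(x,y)\equiv 0$, $h\equiv 1$, condition \eqref{EQ:25} becomes exactly
\[
 g(x)\Bigl(d(x) + \int_0^{\infty}z\,G_1(x,dz)\Bigr) \leq b(x) - \delta,
\]
which is the hypothesis. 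Subcriticality then follows from assertion (c) of the previous theorem.

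No step is genuinely hard; the only place to be careful is the bookkeeping in decomposing the nonlocal part of \eqref{EQ:27} so that $\nu(\{x\})=0$ (guaranteed by $\pi_x(\{x\})=0$), which makes the compensator $f(x)\nu(\{x\})$ disappear and the integrand match the required form. Once that identification is made, everything else reduces to checking that the stated boundedness of $c,g,d$, $b$ and the kernels $z\wedge z^2\,G_0(x,dz)$ and $z\,G_1(x,dz)$ supplies the finiteness demanded by (iv)--(v).
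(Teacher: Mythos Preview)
Your proof is correct and follows exactly the approach of the paper: you identify $\eta(x,dy) = g(x)d(x)\pi_x(dy)$ and $H_1(x,d\nu) = \int_0^{\infty}\delta_{z\delta_x}(d\nu)\,G_0(x,dz) + g(x)\int_0^{\infty}\delta_{z\pi_x}(d\nu)\,G_1(x,dz)$, which is precisely what the paper does before remarking that (i)--(vii) are then ``easily seen'' and that the second assertion ``follows by direct computation.'' You have simply supplied those computations in full, and the bookkeeping (in particular the use of $\pi_x(\{x\})=0$ to kill the compensator) is handled correctly.
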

\begin{proof}
 Let $\eta(x,dy) = g(x) d(x) \pi_x(dy)$ and set 
  \[
  H_1(x,d\nu) = \int \limits_0^{\infty}\delta_{z \delta_x}(d\nu) G_0(x,dz) + \int \limits_{0}^{\infty}\delta_{z \pi_x}(d\nu) g(x) G_1(x,dz),
 \]
 then it is easily seen that conditions (i) -- (vii) are satisfied. The second assertion follows by direct computation.
\end{proof}

\appendix
\section*{Appendix}
\renewcommand{\thesection}{A} 

\subsection{Proof of Theorem \ref{TH:00}}
\begin{proof}
Assertion (a) is a particular case of \cite[Theorem 6.3]{L11}.
For later use we sketch the most important step in the proof.
Define a new transition semigroup $\widetilde{p}_t^{\xi}(x,dy) = e^{-\alpha t} \frac{h(y)}{h(x)}p_t^{\xi}(x,dy)$ and branching mechanism $\widetilde{\phi}(x,f) = h(x)^{-1}\phi(x,hf) - \alpha f(x)$. Then it was shown that there exists a unique locally bounded solution $u_t(x,f)$ to
\[
 u_t(x,f) = \widetilde{p}_t^{\xi}f(x) - \int \limits_{0}^{t} \int \limits_{E} \widetilde{\phi}(y, u_{t-s}) \widetilde{p}_{s}^{\xi}(x,dy) ds, \qquad f \in B_{\1}(E)^+.
\]
This solution defines a comulant semigroup $U_tf(x) = u_t(x,f)$, i.e.
\begin{align}\label{EQ:31}
 U_tf(x) = \int \limits_{E}f(y) \widetilde{\lambda}_t(x,dy) 
 + \int \limits_{M(E)^{\circ}} \left( 1 - e^{- \langle f, \nu \rangle } \right) \widetilde{L}_t(x,d\nu),
\end{align}
where $\widetilde{\lambda}_t$ and $\widetilde{L}_t$ are bounded kernels for each $t \geq 0$. Finally it was shown that 
\begin{align}\label{EQ:32}
 V_tf(x) := h(x) U_t(h^{-1}f)(x), \qquad f \in B_h(E)^+
\end{align}
is the desired solution to \eqref{EQ:00}, i.e. assertion (a) is proved.
By uniqueness we find that $(V_t)_{t \geq 0}$ is a nonlinear semigroup.
Invoking \eqref{EQ:31} and \eqref{EQ:32} we find that
\[
 \lambda_t(x,dy) = \frac{h(x)}{h(y)}\widetilde{\lambda}_t(x,dy), \qquad 
 L_t(x,d\nu) = h(x)\widetilde{L}(x, \cdot) \circ I^{-1}(d\nu),
\]
where $I: M_{\1}(E)^{\circ} \longrightarrow M_{h}(E)^{\circ}$, $I(\nu) = h^{-1}\nu$.
This proves assertion (b).
Assertion (c) can be obtained as follows. 
Define $H_2^{(n)}(d\nu) = \1_{ \{ \langle h, \nu \rangle > \frac{1}{n} \} }H_2(d\nu)$ and let $\psi_n$ be given by \eqref{EQ:13} with $H_2$ replaced by $H_2^{(n)}$. 
By \cite[Proposition 9.17]{L11} there exists a unique Markov kernel
$P^{(n)}_t(\mu,d\nu)$ satisfying \eqref{BRANCHING:LAPLACE} with $\psi$ replaced by $\psi_n$. 
Taking the limit $n \to \infty$ and using Proposition \ref{LEMMA:01} proves the existence of $P_t(\mu,d\nu)$ satisfying \eqref{BRANCHING:LAPLACE}. Since $V_{s+t} = V_s V_t$,
it is not difficult to see that $P_t(\mu,d\nu)$ is a Markov kernel.
\end{proof}

\subsection{Proof of Lemma \ref{LEMMA:06}}
Let $(U_t)_{t \geq 0}$ be the comulant semigroup given by \eqref{EQ:31} and let $(\widetilde{R}_t)_{t \geq 0}$ be the semigroup on $B_{\1}(E)$ obtained from
\[
 \widetilde{R}_tf(x) = \int \limits_{E}f(y) \widetilde{p}_t^{\xi}(x,dy)
 + \int \limits_{0}^{t} \int \limits_{E} \left( \widetilde{\Gamma}\widetilde{R}_{t-s}f(y) - \widetilde{b}(y)\widetilde{R}_{t-s}f(y) \right)\widetilde{p}^{\xi}_s(x,dy)ds,
\]
where $\widetilde{p}_t^{\xi}(x,dy) = e^{-\alpha t} \frac{h(y)}{h(x)}p_t^{\xi}(x,dy)$, $\widetilde{b}(x) = b(x) - \alpha$ and
\begin{align*}
 \widetilde{\Gamma}f(x) = \int \limits_{E}f(y)\widetilde{\gamma}(x,dy),
 \ \ \widetilde{\gamma}(x,dy) = \frac{h(y)}{h(x)}\gamma(x,dy).
\end{align*}
It was shown in \cite[Proposition 2.18 and 2.24]{L11} that 
$U_tf \leq \widetilde{R}_tf$ and
$\frac{1}{\e}U_t(\e f)(x) \nearrow \widetilde{R}_tf(x)$ as $\e \searrow 0$ for each $f \in \B_{\1}(E)^+$. 

Observe that $h(x) \widetilde{R}_t(h^{-1}f)(x)$ defines a semigroup of bounded linear operators on $B_h(E)^+$ which satisfies \eqref{EQ:34}. Since the operator $Bf(x) = \Gamma f(x) - b(x)f(x)$ is a bounded linear operator on $B_h(E)$, it follows that \eqref{EQ:34} has a unique solution, i.e. we get $R_tf(x) = h(x)\widetilde{R}_t(h^{-1}f)(x)$. 
Using representation \eqref{EQ:30} we easily find that
\[
 \frac{1}{\e}V_t(\e f)(x) = h(x) \frac{1}{\e}U_t( \e h^{-1}f)(x) \nearrow h(x) \widetilde{R}_t( h^{-1}f)(x) = R_tf(x).
\]
Hence using \eqref{EQ:32} and this limit one finds  \eqref{EQ:33}.

\subsection{Proof of Proposition \ref{LEMMA:00}}
 Fix $\mu \in M_h(E)$ and let $f \in B_h(E)^+$.
 By linearity and definition of the adjoint semigroup $(R_t^*)_{t \geq 0}$ it suffices to prove for each $t \geq 0$
 \[
  \int \limits_{M_h(E)}\langle f, \nu\rangle P_t(\mu,d\nu)
  = \langle R_t f, \mu \rangle + \int \limits_{0}^{t} \left(\langle R_sf, \beta \rangle + \int \limits_{M_h(E)^{\circ}}\langle R_sf, \nu \rangle H_2(d\nu) \right) ds.
 \]
We start with one auxilliary result.
\begin{Lemma}
 For each $f \in B_h(E)^+$ and $t \geq 0$ one has
 \[
  \lim \limits_{\e \searrow 0} \frac{1}{\e}\int \limits_0^{t} \psi(V_s(\e f)) ds 
  = \int \limits_{0}^{t} \left(\langle R_sf, \beta \rangle + \int \limits_{M_h(E)^{\circ}}\langle R_sf, \nu \rangle H_2(d\nu) \right) ds.
\]
\end{Lemma}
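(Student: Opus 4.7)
The plan is to exploit the linearity of $\psi$, split into the $\beta$-piece and the $H_2$-piece, and pass to the limit in each via Lemma \ref{LEMMA:06}. Using \eqref{EQ:13},
\begin{align*}
 \frac{1}{\e}\psi(V_s(\e f))
 = \left\langle \tfrac{1}{\e} V_s(\e f),\beta \right\rangle
 + \int \limits_{M_h(E)^{\circ}} \frac{1-e^{-\langle V_s(\e f),\nu\rangle}}{\e}\, H_2(d\nu).
\end{align*}

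For the first summand, Lemma \ref{LEMMA:06} yields $\e^{-1}V_s(\e f)(x)\nearrow R_sf(x)$ monotonically as $\e\searrow 0$, so two successive applications of monotone convergence (first against $\beta(dx)$, then against $ds$ on $[0,t]$) will produce $\int_0^t \langle R_sf,\beta\rangle\, ds$. Finiteness poses no difficulty: the iterative construction of $(R_t)_{t\geq 0}$ gives a locally bounded norm $\|R_sf\|_h$, and $\beta\in M_h(E)$.

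For the second summand I would set $a_\e := \langle V_s(\e f),\nu\rangle$ and factorize
\[
 \frac{1-e^{-a_\e}}{\e} \;=\; \frac{1-e^{-a_\e}}{a_\e}\cdot\frac{a_\e}{\e}.
\]
By Lemma \ref{LEMMA:06} the second factor increases monotonically to $\langle R_sf,\nu\rangle$; by monotonicity of $V_s$ in its argument, $a_\e\searrow 0$ monotonically, and since $x\mapsto (1-e^{-x})/x$ is decreasing on $(0,\infty)$ with limit $1$ at $0$, the first factor also increases monotonically, to $1$. Thus the product is monotone increasing as $\e\searrow 0$, with limit $\langle R_sf,\nu\rangle$, and two further applications of monotone convergence (against $H_2(d\nu)$ and then against $ds$) deliver the asserted right-hand side.

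The main obstacle I anticipate is merely verifying that the limiting double integral is actually finite, since otherwise the monotone limit could be $+\infty$. This is where the extra hypothesis \eqref{EQ:06} of Proposition \ref{LEMMA:00} enters: combined with \eqref{EQ:09} it yields $\int_{M_h(E)^{\circ}}\langle h,\nu\rangle H_2(d\nu)<\infty$, and then $\langle R_sf,\nu\rangle\le \|R_sf\|_h\,\langle h,\nu\rangle$ together with local boundedness of $s\mapsto \|R_sf\|_h$ closes the estimate. The factorization trick above is what replaces any need for a dominated-convergence argument in $\e$, which would otherwise be delicate because the integrand in the $H_2$-piece need not be bounded uniformly in $\nu$.
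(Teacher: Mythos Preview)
Your proof is correct and follows essentially the same decomposition as the paper: split $\psi$ into the $\beta$-part and the $H_2$-part, then pass to the limit using Lemma~\ref{LEMMA:06}. The only difference lies in how you treat the $H_2$-term. The paper simply observes
\[
 \frac{1-e^{-\langle V_s(\e f),\nu\rangle}}{\e}\;\le\;\frac{\langle V_s(\e f),\nu\rangle}{\e}\;\le\;\langle R_sf,\nu\rangle,
\]
the last inequality being precisely the monotone convergence $\e^{-1}V_s(\e f)\nearrow R_sf$ from Lemma~\ref{LEMMA:06}, and then invokes dominated convergence with the $\e$-free majorant $\langle R_sf,\nu\rangle$. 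Your factorization $\frac{1-e^{-a_\e}}{a_\e}\cdot\frac{a_\e}{\e}$ and product-of-monotone-factors argument is valid, but unnecessary: the very same Lemma~\ref{LEMMA:06} you use already furnishes a uniform-in-$\e$ dominating function, so dominated convergence is not ``delicate'' here as you suggest. Both routes require the same integrability input, namely \eqref{EQ:06} together with local boundedness of $s\mapsto\|R_sf\|_h$, which you correctly identify.
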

\begin{proof}
 Write
 \begin{align*}
  \frac{1}{\e} \int \limits_0^t \psi(V_s(\e f)) ds
  &= \int \limits_0^t \int \limits_{E} \frac{V_s(\e f)(x)}{\e} \beta(dx) ds + \int \limits_0^t \int \limits_{M_h(E)^{\circ}} \frac{1 - e^{- \langle V_s(\e f), \nu \rangle}}{\e} H_2(d\nu) ds.
 \end{align*}
 Next using $\frac{V_s(\e f)}{\e} \nearrow R_tf$ pointwise
 and 
 \[
  \lim \limits_{\e \searrow 0}\frac{1 - e^{- \langle V_s(\e f), \nu \rangle}}{\e}
  = \langle R_tf, \nu \rangle, \qquad \nu \in M_h(E)^{\circ}
 \]
 with 
  \[
  \frac{1 - e^{- \langle V_s(\e f), \nu \rangle}}{\e} \leq \frac{1}{\e} \langle V_s(\e f), \nu \rangle \leq \langle R_tf, \nu \rangle
 \]
 the assertion follows by dominated convergence.
\end{proof}
Based on this observation we complete the proof of Proposition \ref{LEMMA:00}.
\begin{proof}[Proof of Proposition \ref{LEMMA:00}]
 Fix $f \in B_h(E)^+$ and take $\e > 0$. 
 Applying first monotone convergence and then \eqref{BRANCHING:LAPLACE} gives
 \begin{align*}
  \int \limits_{M_h(E)} \langle f, \nu \rangle P_t(\mu, d\nu)
  &= \lim \limits_{\e \searrow 0}\int \limits_{M_h(E)} \frac{1 - e^{- \langle \e f, \nu \rangle}}{\e} P_t(\mu, d\nu) 
  \\ &= \lim \limits_{\e \searrow 0}\frac{1 - \exp\left( - \langle V_t(\e f), \mu \rangle - \int \limits_{0}^{t} \psi( V_s(\e f)) ds \right)}{\e}.
  \\ &= \lim \limits_{\e \searrow 0}\left( \int \limits_{E} \frac{V_t(\e f)(x)}{\e}\mu(dx) + \frac{1}{\e}\int \limits_{0}^{t}\psi(V_s(\e f)) ds \right)
  \\ &= \langle R_tf, \mu \rangle +
  \int \limits_{0}^{t} \left(\langle R_sf, \beta \rangle + \int \limits_{M_h(E)^{\circ}}\langle R_sf, \nu \rangle H_2(d\nu) \right) ds < \infty.
 \end{align*}
 This proves the assertion.
\end{proof}

\subsection{Some results of distances on $\mathcal{P}(M_h(E))$}

\begin{Lemma}\label{LEMMA:04}
 Let $\rho, \widetilde{\rho} \in \mathcal{P}(M_h(E))$ and $H \in \mathcal{H}(\rho, \widetilde{\rho})$. Then
 \begin{align*}
  d_{\mathcal{L}}(P_t^*\rho, P_t^*\widetilde{\rho})
  \leq \int \limits_{M_h(E) \times M_h(E)} d_{\mathcal{L}}(P_t(\mu,\cdot), P_t(\widetilde{\mu},\cdot))H(d\mu, d\widetilde{\mu}).
 \end{align*}
\end{Lemma}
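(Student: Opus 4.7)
The plan is to reduce the statement to a pointwise-in-$f$ estimate and then use Fubini and the definition of a coupling. First I would fix an arbitrary $f \in B_h(E)^+$ and rewrite the Laplace transform of $P_t^*\rho$ in integrated form: by definition \eqref{EQ:10} and Fubini one has
\[
\mathcal{L}_{P_t^*\rho}(f) = \int_{M_h(E)} \mathcal{L}_{P_t(\mu,\cdot)}(f)\, \rho(d\mu),
\]
and the analogous identity holds for $\widetilde{\rho}$. Since $H$ has marginals $\rho$ and $\widetilde{\rho}$, each of these two integrals can be rewritten as an integral against $H$ on $M_h(E) \times M_h(E)$.

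Next I would subtract the two expressions, move the absolute value inside the integral against $H$, and divide by $\|f\|_\infty$ to obtain
\[
\frac{|\mathcal{L}_{P_t^*\rho}(f) - \mathcal{L}_{P_t^*\widetilde{\rho}}(f)|}{\|f\|_\infty} \leq \int_{M_h(E)\times M_h(E)} \frac{|\mathcal{L}_{P_t(\mu,\cdot)}(f) - \mathcal{L}_{P_t(\widetilde{\mu},\cdot)}(f)|}{\|f\|_\infty}\, H(d\mu, d\widetilde{\mu}).
\]
For each fixed $(\mu, \widetilde{\mu})$, the integrand on the right is bounded above by $d_{\mathcal{L}}(P_t(\mu, \cdot), P_t(\widetilde{\mu}, \cdot))$ by the very definition of $d_{\mathcal{L}}$, so the right-hand side is dominated by the integral appearing in the statement. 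Taking the supremum over $f \in B_h(E)^+$ on the left then yields the desired inequality.

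The one nontrivial point is that the bound under the integral involves a supremum over the uncountable family $B_h(E)^+$, so a priori one needs measurability of $(\mu, \widetilde{\mu}) \mapsto d_{\mathcal{L}}(P_t(\mu, \cdot), P_t(\widetilde{\mu}, \cdot))$ in order for the right-hand side of the claim to make sense. I expect this to be the main (though minor) obstacle. Because each $\mu \mapsto \mathcal{L}_{P_t(\mu,\cdot)}(f)$ is Borel measurable (by \eqref{BRANCHING:LAPLACE} and the measurability properties of $V_tf$), and because $f \mapsto \mathcal{L}_{P_t(\mu,\cdot)}(f)$ is continuous under bounded pointwise convergence, the supremum can be reduced to a countable one — e.g.\ using \cite[Corollary 1.12]{L11} together with separability arguments for $C_h(E)^+$, or by approximating a general $f \in B_h(E)^+$ by a countable dense sub-collection and applying Proposition \ref{LEMMA:01}(c). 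This restores joint measurability of the integrand, after which the Fubini step above is rigorous and no further calculation is required.
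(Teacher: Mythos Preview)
Your proof is correct and follows essentially the same route as the paper's: fix $f$, express $\mathcal{L}_{P_t^*\rho}(f)$ as an integral of $\mathcal{L}_{P_t(\mu,\cdot)}(f)$ against the marginal, lift both integrals to the coupling $H$, move the absolute value inside, bound the integrand by $d_{\mathcal{L}}(P_t(\mu,\cdot),P_t(\widetilde\mu,\cdot))$, and take the supremum over $f$. The only difference is that you explicitly flag and resolve the measurability of $(\mu,\widetilde\mu)\mapsto d_{\mathcal{L}}(P_t(\mu,\cdot),P_t(\widetilde\mu,\cdot))$, which the paper's proof takes for granted; your reduction to a countable supremum is a reasonable way to handle it.
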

\begin{proof}
 Take $f \in B_h(E)^+$. Using the definition of $P_t^*$ and the fact that $H$ is a coupling of $(\rho, \widetilde{\rho})$ gives
 \begin{align*}
  |\mathcal{L}_{P_t^*\rho}(f) - \mathcal{L}_{P_t^*\widetilde{\rho}}(f)|
  &= \left| \int \limits_{M_h(E)} \int \limits_{M_h(E) \times M_h(E)} e^{- \langle f,\nu \rangle} \left( P_t(\mu, d\nu) - P_t(\widetilde{\mu}, d\nu) \right)H(d\mu, d\widetilde{\mu}) \right|
  \\ &\leq \int \limits_{M_h(E) \times M_h(E)} \left| \mathcal{L}_{P_t(\mu,\cdot)}(f) - \mathcal{L}_{P_t(\widetilde{\mu},\cdot)}(f)\right| H(d\mu,d\widetilde{\mu})
  \\ &\leq \| f \|_h \int \limits_{M_h(E) \times M_h(E)} d_{\mathcal{L}}(P_t(\mu,\cdot), P_t(\widetilde{\mu}, \cdot)) H(d\mu, d\widetilde{\mu}).
 \end{align*}
 Since $f$ was arbitrary, the assertion is proved.
\end{proof}

\begin{Lemma}\label{LEMMA:02}
Let $\rho, \widetilde{\rho}, \thinspace g \in\mathcal{P}(M_h(E))$. 
Then $W_{1}(\rho \ast g,\widetilde{\rho}\ast g)\leq W_{1}( \rho,\widetilde{\rho})$.
\end{Lemma}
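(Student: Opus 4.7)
The plan is to transport any coupling of $(\rho,\widetilde{\rho})$ to an admissible coupling of $(\rho\ast g,\widetilde{\rho}\ast g)$ having exactly the same cost, by convolving with an independent copy of $g$. First I would fix an arbitrary $H\in\mathcal{H}(\rho,\widetilde{\rho})$, form the product measure $H\otimes g$ on $M_h(E)^3$, and consider the addition map
\[
 \Phi:M_h(E)\times M_h(E)\times M_h(E)\longrightarrow M_h(E)\times M_h(E),\qquad \Phi(\mu,\widetilde{\mu},\nu)=(\mu+\nu,\widetilde{\mu}+\nu),
\]
which is continuous, hence Borel measurable, with respect to the topology \eqref{EQ:21}. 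Then I would set $H':=\Phi_{\ast}(H\otimes g)$.

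Next I would verify that $H'\in\mathcal{H}(\rho\ast g,\widetilde{\rho}\ast g)$. The pushforward of $H\otimes g$ under the first coordinate projection $(\mu,\widetilde{\mu},\nu)\mapsto\mu+\nu$ is the law of $X+Z$ where $X\sim\rho$ and $Z\sim g$ are independent, which is by definition $\rho\ast g$; the symmetric computation handles the second marginal. Hence $H'$ is a legitimate coupling.

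Finally, since $(\mu+\nu)-(\widetilde{\mu}+\nu)=\mu-\widetilde{\mu}$ as signed measures, the weighted total variation norm is translation invariant,
\[
 \|(\mu+\nu)-(\widetilde{\mu}+\nu)\|_{TV,h}=\|\mu-\widetilde{\mu}\|_{TV,h},
\]
so by Fubini the transport cost of $H'$ equals
\[
 \int_{M_h(E)^2}\|\mu'-\widetilde{\mu}'\|_{TV,h}\,H'(d\mu',d\widetilde{\mu}')
 =\int_{M_h(E)^2}\|\mu-\widetilde{\mu}\|_{TV,h}\,H(d\mu,d\widetilde{\mu}).
\]
Using $H'\in\mathcal{H}(\rho\ast g,\widetilde{\rho}\ast g)$ in the definition of $W_1$ and then taking the infimum over $H\in\mathcal{H}(\rho,\widetilde{\rho})$ yields $W_1(\rho\ast g,\widetilde{\rho}\ast g)\leq W_1(\rho,\widetilde{\rho})$.

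No step presents a real obstacle: the only points that require any care are checking that $\Phi$ is measurable and that the marginal computation correctly identifies $\rho\ast g$ and $\widetilde{\rho}\ast g$; both follow directly from the definition of convolution on the abelian topological semigroup $(M_h(E),+)$. If $W_1(\rho,\widetilde{\rho})=\infty$ the conclusion is vacuous, so the argument above covers the nontrivial case without further assumptions on $g$.
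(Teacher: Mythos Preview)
Your proof is correct and takes a genuinely different route from the paper's. The paper argues on the dual side via Kantorovich--Rubinstein duality: it writes
\[
 W_1(\rho\ast g,\widetilde{\rho}\ast g)=\sup_{\|F\|_{\mathrm{Lip}}\le 1}\left|\int F\,d(\rho\ast g)-\int F\,d(\widetilde{\rho}\ast g)\right|,
\]
rewrites each integral as $\int F_g\,d\rho$ with $F_g(\mu)=\int F(\mu+\widetilde{\mu})\,g(d\widetilde{\mu})$, observes that $\|F_g\|_{\mathrm{Lip}}\le 1$, and bounds by the same supremum for $(\rho,\widetilde{\rho})$. You instead work on the primal side: you push a coupling $H$ of $(\rho,\widetilde{\rho})$ forward along $(\mu,\widetilde{\mu},\nu)\mapsto(\mu+\nu,\widetilde{\mu}+\nu)$ and use translation invariance of $\|\cdot\|_{TV,h}$ to see the cost is unchanged. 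Your argument is more self-contained because it avoids invoking duality (which, on a general metric space, needs its own justification); the paper's argument is shorter once duality is taken for granted and also makes transparent why only the Lipschitz structure matters. One small wording remark: the equality of costs is really the change-of-variables formula for the pushforward $H'=\Phi_\ast(H\otimes g)$ rather than Fubini, but this does not affect correctness.
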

\begin{proof}
 For $F: M_h(E) \longrightarrow \R$ define $\Vert F \Vert_{\mathrm{Lip}} = \sup_{\mu \neq \widetilde{\mu}} \frac{|F(\mu) - F(\widetilde{\mu})|}{\| \mu - \widetilde{\mu}\|_{TV,h}}$. Using the Kantorovich-Duality we obtain
\begin{align*}
W_{1}(\rho \ast g, \widetilde{\rho}\ast g) 
&= \sup_{\| F \|_{\mathrm{Lip}} \leq 1} \left| \int \limits_{M_h(E)} F(\mu) (\rho\ast g)(d\mu) - \int \limits_{M_h(E)} F(\mu)(\widetilde{\rho}\ast g)(d\mu)\right| 
\\ &= \sup_{\| F \|_{\mathrm{Lip}} \leq 1} \left| \int \limits_{M_h(E)} F_g(\mu) \rho(d\mu) - \int \limits_{M_h(E)} F_g(x)\widetilde{\rho}(d\mu)\right| 
\\ &\leq \sup_{\| F \|_{\mathrm{Lip}} \leq 1} \left| \int \limits_{M_h(E)} F(\mu) \rho(d\mu) - \int \limits_{M_h(E)} F(\mu) \widetilde{\rho}(d\mu)\right| = W_1(\rho, \widetilde{\rho}),
\end{align*}
where we used that $F_g(\mu)=\int_{ M_h(E)} F( \mu + \widetilde{\mu})g(d\widetilde{\mu})$ satisfies $\| F_g\|_{\mathrm{Lip}} \leq 1$.
\end{proof}

\bibliographystyle{amsplain}
\addcontentsline{toc}{section}{\refname}\bibliography{Bibliography}

\end{document}